\theoremstyle{plain}
\newtheorem{thm}{Theorem}[section]
\newtheorem{prop}[thm]{Proposition}
\newtheorem{cor}[thm]{Corollary}
\newtheorem{lem}[thm]{Lemma}
\theoremstyle{definition}
\newtheorem{defn}[thm]{Definition}
\theoremstyle{remark}
\newtheorem{rem}[thm]{Remark}
\theoremstyle{plain}
\newcommand{\R}{\mathbb{R}}
\newcommand{\N}{\mathbb{N}}
\newcommand{\CP}{\mathbb{C}P}
\newcommand{\ind}{\mathrm{ind}}
\newcommand{\dimn}{\mathrm{dim}}
\newcommand{\identity}{\mathrm{id}}
\newcommand{\scal}{\mathrm{scal}}
\newcommand{\ric}{\mathrm{Ric}}
\DeclareMathOperator{\trace}{tr}
\newcommand{\kernel}{\mathrm{ker}}
\newcommand{\mult}{\mathrm{mult}}
\newcommand{\dv}{\text{ }dV}
\newcommand{\Diff}{\mathrm{Diff}}
\newcommand{\spectrum}{\mathrm{spec}}
\newcommand{\Parallel}{\mathrm{par}}
\newcommand{\Biggmid}{\hspace{1mm}\Bigg\vert\hspace{1mm}}
\renewcommand{\title}[1]{{\bfseries #1}\par}
\renewcommand{\author}[1]{\medskip{#1}\par\smallskip}
\newcommand{\affiliation}[1]{{\itshape #1}\par}
\newcommand{\email}[1]{E-mail:~\texttt{#1}\par}
\numberwithin{equation}{section}
\begin{document}
\begin{center}
\title{\LARGE On infinitesimal Einstein deformations}
\vspace{3mm}
\author{\Large Klaus Kröncke}
\vspace{3mm}
\affiliation{Universität Regesnburg, Fakultät für Mathematik\\Universitätsstraße 31\\93053 Regensburg, Germany}
\vspace{3mm} 
\affiliation{Universität Potsdam, Institut für Mathematik\\Am Neuen Palais 10\\14469 Potsdam, Germany} 
\email{klaus.kroencke@mathematik.uni-regensburg.de} 
\end{center}
\vspace{2mm}
\begin{abstract}We study infinitesimal Einstein deformations on compact flat manifolds and on product manifolds. Moreover, we prove refinements of results by Koiso and Bourguignon which yield
obstructions on the existence of infinitesimal Einstein deformations under certain curvature conditions.
\end{abstract}
 \section{Introduction}
 Let $M^n$ be a compact manifold of dimension $n\geq3$ and let $\mathcal{M}$ be the set of smooth Riemannian metrics on it.
Given an Einstein metric $g$, one may ask whether $g$ is isolated in the set of Einstein structures, i.e.\ any other Einstein metric in a small neighbourhood in $\mathcal{M}$ is homothetic to $g$.

To study this question, we consider infinitesimal Einstein deformations, that are symmetric $2$-tensors $h$ which are trace-free and divergence-free and satisfy the linearized Einstein equations
\begin{align*}
\Delta_Eh:= \nabla^*\nabla h-2\mathring{R}h=0.
\end{align*}
Trace-free and divergence-free symmetric $2$-tensors are also called $TT$-tensors.
By ellipticity of the involved operator, the space of infinitesimal Einstein deformations is finite dimensional since $M$ is compact. If $g$ has no infinitesimal Einstein deformations, it is isolated in the space of Einstein structures. 
The converse is not true: The product metric on $S^2\times \CP^{2n}$ is isolated although it has infinitesimal Einstein deformations \cite{Koi82}.

Moreover, as is well-known, Einstein metrics of volume $c$ are critical points of the Einstein-Hilbert action
\begin{align*}S\colon\mathcal{M}_c\ni g&\mapsto \int_M\scal_g\dv_g
\end{align*}
\cite{Hil15}. Here, $\mathcal{M}_c$ is the set of Einstein metrics of volume $c$.
 Einstein metrics are always saddle points of the Einstein-Hilbert action but there is a notion of stability of Einstein manifolds which is as follows: 
We say that an Einstein manifold is stable, if $S''(h)\leq0$ for all $TT$-tensors. We call it strictly stable if $S''(h)<0$ for all nonzero $TT$-tensors.
An Einstein manifold is strictly stable if and only if it is stable
 and does not admit infinitesimal Einstein deformations.
This stability problem has been studied by Koiso \cite{Koi78,Koi80,Koi83}, Dai, Wang and Wei \cite{DWW05,DWW07} and in a recent paper by the author \cite{Kro14}.

In this work, we study infinitesimal Einstein deformations on certain classes of manifolds.
  Throughout, any manifold $M^n$ is compact and $n\geq3$ unless the contrary is explicitly asserted. This work is organized as follows:
In section \ref{bieberbach}, we consider compact flat manifolds. We compute the dimension of infinitesimal Einstein deformations in terms of the holonomy of the manifold:
 \begin{thm}\label{bieberbachdeformation}Let $(M=\R^n/G,g)$ be a Bieberbach manifold\index{Bieberbach!manifold} and let $\rho$ be the canonical representation of the holonomy of $G$ on $\R^n$. Let
 \begin{align*}\rho\cong(\rho_{1})^{i_1}\oplus\ldots \oplus (\rho_l)^{i_l}
 \end{align*}
 be an irreducible decomposition of $\rho$. Then the dimension of the space of infinitesimal Einstein deformations is equal to
 \begin{align*}\dimn(\kernel(\Delta_{E}|_{TT})=\sum_{j=1}^l \frac{i_j(i_j+1)}{2}-1.
\end{align*}
 \end{thm}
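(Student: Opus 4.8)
The plan is to use flatness to kill the curvature term in $\Delta_E$ and then to translate the problem into representation theory of the holonomy group. Since $g$ is flat, $\mathring{R}\equiv0$, so on $TT$-tensors $\Delta_Eh=\nabla^*\nabla h$; integrating $\int_M\langle\nabla^*\nabla h,h\rangle\dv=\int_M|\nabla h|^2\dv$ over the compact manifold $M$ shows that $\Delta_Eh=0$ is equivalent to $\nabla h=0$. A parallel symmetric $2$-tensor is automatically divergence-free and of constant trace, so subtracting a suitable multiple of $g$ keeps it parallel and makes it trace-free. Hence $\kernel(\Delta_E|_{TT})$ is exactly the space of trace-free parallel symmetric $2$-tensors, and $\dimn\kernel(\Delta_E|_{TT})=\dimn P-1$, where $P$ is the space of all parallel symmetric $2$-tensors (the $-1$ accounts for $\R\cdot g\subset P$, using that the trace is a surjective functional on $P$).

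Next I would invoke the holonomy principle. For the flat manifold $M=\R^n/G$ the restricted holonomy group is the image $\Phi\subset O(n)$ of $G$ under the projection onto linear parts, acting on $T_xM\cong\R^n$ by $\rho$. Evaluation at a point then identifies $P$ with the space of $\Phi$-invariant symmetric bilinear forms on $\R^n$, i.e.\ with $(\mathrm{Sym}^2\rho)^\Phi$ after using $g$ to identify $\rho$ with $\rho^*$. So everything reduces to proving
\begin{align*}
\dimn(\mathrm{Sym}^2\rho)^\Phi=\sum_{j=1}^l\frac{i_j(i_j+1)}{2}.
\end{align*}

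For this I would identify $(\mathrm{Sym}^2\rho)^\Phi$ with the space of $g$-self-adjoint $\Phi$-equivariant endomorphisms of $\R^n$. Writing $\rho\cong\bigoplus_j\rho_j\otimes\R^{i_j}$ with the $\rho_j$ pairwise non-isomorphic, Schur's lemma gives $\End_\Phi(\rho)\cong\bigoplus_j\mathrm{Mat}_{i_j}(\mathbb{D}_j)$ with $\mathbb{D}_j=\End_\Phi(\rho_j)$, and since distinct isotypic components are mutually $g$-orthogonal and carry no equivariant homomorphisms between them, a self-adjoint equivariant endomorphism is block-diagonal. On the $j$-th block its self-adjoint part is $\mathrm{Sym}^2\R^{i_j}$, of dimension $\tfrac{i_j(i_j+1)}{2}$, precisely when $\mathbb{D}_j=\R$; summing over $j$ then gives the claimed formula.

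The genuinely delicate point --- and, I expect, the crux of the proof --- is the type of the division algebra $\mathbb{D}_j$: a priori $\mathbb{D}_j\in\{\R,\C,\mathbb{H}\}$, and for $i_j\geq2$ the self-adjoint part of $\mathrm{Mat}_{i_j}(\mathbb{D}_j)$ strictly exceeds $\mathrm{Sym}^2\R^{i_j}$ in the complex and quaternionic cases. Verifying that each summand contributes exactly $\tfrac{i_j(i_j+1)}{2}$ is where the Bieberbach hypothesis must enter: torsion-freeness of $G$ strongly constrains the finite group $\Phi$ and its lattice action $\rho$ --- via Bieberbach's theorems (the lattice is the maximal abelian normal subgroup, $\Phi=G/L$), the crystallographic restriction on finite subgroups of $GL_n(\Z)$, and the fact that no element of prime order of $\Phi$ can act on $\R^n$ without fixed vectors --- and this is precisely the input that pins down the Frobenius--Schur types of the $\rho_j$ that occur with multiplicity. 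Once this is settled, the three steps above assemble into the theorem.
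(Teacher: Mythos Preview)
Your strategy is exactly the paper's: flatness gives $\Delta_E=\nabla^*\nabla$, so infinitesimal Einstein deformations are the trace-free parallel symmetric $2$-tensors; the holonomy principle then identifies parallel symmetric $2$-tensors with $\Phi$-invariant symmetric bilinear forms on $\R^n$, and one counts these by Schur's lemma applied to the isotypic decomposition. (Minor slip: on a flat manifold the \emph{restricted} holonomy is trivial; the group $\Phi=r(G)$ you mean is the \emph{full} holonomy.) The paper carries this out in Lemma~\ref{holonomy} through Proposition~\ref{paralleltensors}.

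The point you single out as ``the crux'' is real, but the paper does not treat it as such: in the passage before Proposition~\ref{paralleltensors} it simply asserts that when $E_i\cong E_j$ are equivalent irreducible summands, ``any other equivalence is a multiple of $A_p$'', i.e.\ it tacitly takes $\End_\Phi(\rho_j)=\R$ for every $j$. No Bieberbach-specific input is invoked there.

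And in fact no such input can rescue the formula: your proposed mechanism fails. Take $L=\Z\,e_1\oplus L_0\oplus L_0\subset\R\oplus\R^2\oplus\R^2$ with $L_0$ the hexagonal lattice, and let $G$ be generated by the translations in $L$ together with $g=(A,\tfrac13 e_1)$, where $A=\mathrm{diag}(1,R_{2\pi/3},R_{2\pi/3})$. Then $A$ preserves $L$, $g^3=(I,e_1)\in L$, and $G$ acts freely on $\R^5$ since for any element with nontrivial rotational part the $e_1$-coordinate translates by a non-integer. Thus $M=\R^5/G$ is a Bieberbach $5$-manifold with holonomy $\Z/3$ and $\rho\cong\mathrm{triv}\oplus\sigma\oplus\sigma$, where $\sigma$ is the $2$-dimensional rotation representation of $\Z/3$. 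Here $\End_{\Z/3}(\sigma)\cong\C$, so the off-diagonal block between the two copies of $\sigma$ contributes $\dim_\R\C=2$ rather than $1$, and the space of $\Phi$-invariant symmetric bilinear forms on $\R^5$ is $5$-dimensional; hence $\dim\ker(\Delta_E|_{TT})=4$, whereas the stated formula yields $\tfrac{1\cdot2}{2}+\tfrac{2\cdot3}{2}-1=3$. The correct contribution of the $j$-th isotypic block is $i_j+d_j\binom{i_j}{2}$ with $d_j=\dim_\R\End_\Phi(\rho_j)\in\{1,2,4\}$, so the theorem as stated holds only when every $\rho_j$ occurring with multiplicity $\geq 2$ is of real type; your instinct that something extra is needed was right, but the Bieberbach hypothesis alone does not supply it.
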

Here, $TT$ denotes the space of $TT$-tensors.
In section \ref{productmanifolds}, we consider products of Einstein spaces and we compute the kernel and the coindex of $S''$ restricted to $TT$-tensors on products of Einstein spaces.
As a result of our discussion, we get
\begin{thm}\label{existenceIED}Let $(M,g_1)$ be an Einstein manifold with positive Einstein constant $\mu$ and suppose, $2\mu\in\spectrum(\Delta)$. Then for any other Einstein manifold $(N,g_2)$ with the same Einstein constant, the product manifold $(M\times N, g_1+g_2)$
admits infinitesimal Einstein deformations.
\end{thm}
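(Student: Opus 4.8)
The strategy is to exhibit an explicit nonzero $TT$-tensor in $\kernel(\Delta_E)$ on the product, built from an eigenfunction of the first factor. Put $n_1=\dimn M$, $n_2=\dimn N$ and $\bar g=g_1+g_2$. Since both factors are Einstein with the same constant $\mu$, the product $(M\times N,\bar g)$ is Einstein with constant $\mu$, and its curvature operator is the direct sum of those of the factors, with no components mixing $TM$ and $TN$. Pick a nonzero $f\in C^\infty(M)$ with $\Delta f=2\mu f$, regard $f$, $g_1$, $g_2$ and $\nabla^2_{g_1}f$ as tensors on $M\times N$ in the obvious way, and set
\begin{align*}
h=\nabla^2_{g_1}f+\mu f\,g_1-\frac{(n_1-2)\mu}{n_2}\,f\,g_2.
\end{align*}
The claim is that $h$ is a nonzero infinitesimal Einstein deformation.

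First I would check that $h$ is $TT$. Tracelessness is immediate from $\trace_{\bar g}\nabla^2_{g_1}f=-\Delta f=-2\mu f$ together with the choice of the coefficient of $f g_2$. For the divergence one uses the Bochner identity $\delta_{g_1}(\nabla^2_{g_1}f)=d(\Delta f)-\ric_{g_1}(df,\cdot)=2\mu\,df-\mu\,df=\mu\,df$ on the Einstein manifold $(M,g_1)$, together with $\delta_{\bar g}(f g_1)=-df$ and $\delta_{\bar g}(f g_2)=0$ (the latter two because $f$ is constant along $N$ and $g_1,g_2$ are parallel); adding up, $\delta_{\bar g}h=\mu\,df-\mu\,df+0=0$. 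So $h\in TT$.

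The heart of the argument is that $h$ lies in $\kernel(\Delta_{E,\bar g})$, and this is where the hypothesis $2\mu\in\spectrum(\Delta)$ enters: each of the three building blocks $\nabla^2_{g_1}f$, $f g_1$, $f g_2$ is individually an eigentensor of $\Delta_{E,\bar g}$ with eigenvalue $2\mu-2\mu=0$. For $f g_i$ this follows from $\bar\nabla^*\bar\nabla(f g_i)=(\Delta f)g_i=2\mu f g_i$ (as $g_i$ is $\bar\nabla$-parallel) together with $\mathring{\bar R}(f g_i)=\mu f g_i$, which uses the vanishing of the mixed curvature components and $\mathring{R_i}g_i=\ric_{g_i}=\mu g_i$; hence $\Delta_{E,\bar g}(f g_i)=(2\mu-2\mu)f g_i=0$. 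For the Hessian term one uses that $\delta^*$ intertwines the Hodge Laplacian on $1$-forms with the Lichnerowicz Laplacian $\Delta_L$, so on $(M,g_1)$ one gets $\Delta_L(\nabla^2_{g_1}f)=\nabla^2_{g_1}(\Delta f)=2\mu\,\nabla^2_{g_1}f$; since on an Einstein manifold $\Delta_E=\Delta_L-2\mu\,\identity$ on symmetric $2$-tensors, this gives $\Delta_{E,g_1}(\nabla^2_{g_1}f)=0$, and because $\nabla^2_{g_1}f$ is supported in the $M$-directions and the product curvature does not mix the factors, the same computation shows $\Delta_{E,\bar g}(\nabla^2_{g_1}f)=0$. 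Therefore $\Delta_{E,\bar g}h=0$.

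It remains to see $h\neq 0$. Its component in the $N$-directions is exactly $-\frac{(n_1-2)\mu}{n_2}f\,g_2$, a nonzero function times $g_2$ (here the standing assumption $n_1\geq 3$ is used, so that the coefficient does not vanish), while the other two summands are supported in the $M$-directions; hence no cancellation occurs and $h\neq 0$. Consequently $\kernel(\Delta_{E,\bar g}|_{TT})\neq 0$, proving the theorem. I expect the main obstacle to be the bookkeeping for $\Delta_{E,\bar g}$ on the product — in particular making sure that $\nabla^2_{g_1}f$ and $f g_2$ really remain eigentensors with the expected eigenvalue after being pushed to the product, which rests on the vanishing of the mixed components of the product curvature operator and on $\mathring{\bar R}g_2=\ric_{g_2}$ — together with keeping the sign conventions in the Bochner identities straight, which is what pins down the precise coefficients in the definition of $h$. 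This is, in effect, a special case of the general description of $TT$-eigentensors on products carried out in Section~\ref{productmanifolds}.
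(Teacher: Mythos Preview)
Your proof is correct and coincides with the paper's explicit construction: in the remark following Proposition~\ref{productindex} the paper writes down exactly the tensor $h=\alpha f g_1+\beta f g_2+\gamma\nabla^2 f$ with $\beta=\frac{2-n_1}{n_2}\alpha$ and $\gamma=\alpha/\mu$, which after normalizing $\gamma=1$ is your $h$, and notes that $\Delta_E h=0$ and $h\in TT$. The paper's formal derivation routes through the dimension formula of Proposition~\ref{productindex} (the term $\mult_{\Delta_0^M}(2\mu)$ is positive by hypothesis), whereas you verify the $TT$ and kernel conditions by hand; your computation simply spells out the details the paper leaves implicit, relying on the same ingredients (Lemma~\ref{LL} for $\Delta_E(\nabla^2 f)=0$, and the product structure of $\mathring{R}$ from Proposition~\ref{productspectrum} for the $f g_i$ pieces).
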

The dimension of the space of infinitesimal Einstein deformations is bounded from below by the multiplicity of the eigenvalue $2\mu$.
By a result of Matsushima (\cite{Mat72}, see also \cite[Theorem 11.52]{Bes08}),
a K\"ahler-Einstein metric with Einstein constant $\mu$ admits a holomorphic vector field if and only if $2\mu$ is contained in the spectrum of the Laplacian.
Therefore we obtain
\begin{cor}
 Let $(M,g_1)$ be a positive K\"ahler-Einstein manifold which admits a holomorphic vector field. Then for any other Einstein manifold $(N,g_2)$ with the same Einstein constant, the product manifold $(M\times N, g_1+g_2)$
admits infinitesimal Einstein deformations.
\end{cor}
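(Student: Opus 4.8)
The plan is to read this off from Theorem~\ref{existenceIED} together with the Matsushima criterion already recalled above; the only work is to check that the hypotheses of Theorem~\ref{existenceIED} are met. So first I would note that by definition a positive Kähler--Einstein manifold $(M,g_1)$ is Einstein with $\ric_{g_1}=\mu g_1$ for some $\mu>0$; in particular its Einstein constant is positive, as required in Theorem~\ref{existenceIED}.

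Next, since $(M,g_1)$ is assumed to carry a nonzero holomorphic vector field, Matsushima's theorem (\cite{Mat72}, see also \cite[Theorem~11.52]{Bes08}) applies and yields $2\mu\in\spectrum(\Delta_{g_1})$, where $\Delta$ denotes the Laplace--Beltrami operator acting on functions. With this, $(M,g_1)$ satisfies exactly the two hypotheses ``positive Einstein constant $\mu$'' and ``$2\mu\in\spectrum(\Delta)$'' of Theorem~\ref{existenceIED}. Applying that theorem to $(M,g_1)$ and to an arbitrary Einstein manifold $(N,g_2)$ with the same Einstein constant $\mu$ then gives that the Riemannian product $(M\times N,\,g_1+g_2)$ admits infinitesimal Einstein deformations, which is the assertion. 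One may also record the sharper statement from the remark after Theorem~\ref{existenceIED}: $\dimn(\kernel(\Delta_E|_{TT}))$ on the product is bounded below by the multiplicity of $2\mu$ as an eigenvalue of $\Delta$ on $(M,g_1)$, and this multiplicity is positive precisely because a holomorphic vector field exists.

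Since both ingredients are in hand, there is no genuine obstacle. The one point deserving care is normalization bookkeeping: one must make sure that ``Einstein constant'' in the meaning of ``positive Kähler--Einstein'', in Matsushima's statement, and in Theorem~\ref{existenceIED} all refer to the same scalar $\mu$ (with $\ric=\mu g$), and that ``the Laplacian'' is in each case the scalar Laplace--Beltrami operator, so that the two occurrences of the condition $2\mu\in\spectrum(\Delta)$ really coincide and the chain of implications goes through without a hidden factor.
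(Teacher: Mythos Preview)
Your proposal is correct and matches the paper's own derivation exactly: the corollary is stated as an immediate consequence of Theorem~\ref{existenceIED} combined with Matsushima's criterion that a K\"ahler--Einstein metric with constant $\mu$ admits a holomorphic vector field if and only if $2\mu\in\spectrum(\Delta)$. Your remark on normalization bookkeeping is a reasonable sanity check but is not needed beyond what the paper already assumes.
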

This allows us to generate large families of Einstein manifolds which have infinitesimal Einstein deformations. In fact, all known K\"ahler-Einstein manifolds with $c_1>0$ admit holomorphic vector fields \cite[Remark 12.101]{Bes08}.
In section  \ref{stabilitysectional}, we refine the following stability criterions which are well-known from the literature:
\begin{cor}[Bourguignon, unpublished]\label{pinching}           
 Let $(M,g)$ be an Einstein manifold such that the sectional curvature lies in the interval $(\frac{n-2}{3n},1]$. Then $(M,g)$ is strictly stable.
\end{cor}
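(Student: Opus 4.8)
The plan is to use the variational characterisation of strict stability directly. Since the Einstein operator $\Delta_E=\nabla^*\nabla-2\mathring R$ is elliptic and $M$ is compact, $\Delta_E|_{TT}$ has a smallest eigenvalue, and $(M,g)$ is strictly stable precisely when it is positive, i.e.\ when
\begin{align*}
\int_M\left(|\nabla h|^2-2\langle\mathring Rh,h\rangle\right)\dv>0
\end{align*}
for every nonzero $TT$-tensor $h$. (The curvature hypothesis forces $\ric=E g$ with $E>0$.) Everything thus reduces to controlling $\int_M\langle\mathring Rh,h\rangle\dv$ from above in terms of $\int_M|\nabla h|^2\dv$.

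The first ingredient, which is the heart of Bourguignon's argument, is a pointwise bound for the curvature term. Fix $p\in M$ and diagonalise $h(p)$ in an orthonormal frame with eigenvalues $\lambda_1,\dots,\lambda_n$; since $h$ is trace-free, $\sum_i\lambda_i=0$ and $|h|^2=\sum_i\lambda_i^2$. Evaluating $\langle\mathring Rh,h\rangle$ in this frame, writing $2\lambda_i\lambda_k=(\lambda_i+\lambda_k)^2-\lambda_i^2-\lambda_k^2$, and using the Einstein identity $\sum_{k\neq i}K_{ik}=E$ (with $K_{ik}$ the sectional curvature of $e_i\wedge e_k$) to telescope the diagonal part, one obtains
\begin{align*}
\langle\mathring Rh,h\rangle=\sum_{i<k}K_{ik}(\lambda_i+\lambda_k)^2-E\,|h|^2 .
\end{align*}
Since $\sum_{i<k}(\lambda_i+\lambda_k)^2=(n-2)|h|^2$, bounding each $K_{ik}$ by the minimum $K_{\min}$ and maximum $K_{\max}$ of the sectional curvature gives
\begin{align*}
\left((n-2)K_{\min}-E\right)|h|^2\ \leq\ \langle\mathring Rh,h\rangle\ \leq\ \left((n-2)K_{\max}-E\right)|h|^2 ,
\end{align*}
and one remembers that $(n-1)K_{\min}\leq E\leq(n-1)K_{\max}$.

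The second ingredient is an integrated Bochner estimate for $\int_M|\nabla h|^2\dv$, valid for $TT$-tensors on Einstein manifolds. Commuting two covariant derivatives and integrating by parts, with $\operatorname{div}h=0$, expresses $\int_M\nabla_ih_{jk}\nabla^jh^{ik}\dv$ through $E\int_M|h|^2\dv$ and $\int_M\langle\mathring Rh,h\rangle\dv$; combining this with the nonnegativity of $\|d^\nabla h\|^2$ and of the totally symmetrised covariant derivative of $h$ then yields a lower bound $\int_M|\nabla h|^2\dv\geq\alpha\,E\int_M|h|^2\dv-\beta\int_M\langle\mathring Rh,h\rangle\dv$ with suitable constants $\alpha,\beta$. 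Substituting the pointwise estimate for $\langle\mathring Rh,h\rangle$, normalising $K_{\max}=1$ and using $(n-1)K_{\min}\leq E$, the quantity $\int_M(|\nabla h|^2-2\langle\mathring Rh,h\rangle)\dv$ is bounded below by a positive multiple of $\|h\|^2$ once $K_{\min}>\tfrac{n-2}{3n}$; this is the arithmetic step in which the corollary falls out of the refined estimate of this section.

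The point I expect to require care is the sharp constant. The naive combination of the Bochner inequalities above already gives strict stability, but only under a pinching strictly stronger than $K\in(\tfrac{n-2}{3n},1]$; obtaining the optimal value $\tfrac{n-2}{3n}$ uses the refined Koiso–Bourguignon estimate established earlier in this section, together with a careful use of the constraint that $E$ lies between $(n-1)K_{\min}$ and $(n-1)K_{\max}$. Granting that estimate, the corollary is just the case where the pinching makes the relevant curvature combination positive, so what remains is a short computation.
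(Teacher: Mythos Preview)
Your framework is the paper's: combine a pointwise bound on $\langle\mathring{R}h,h\rangle$ with the Bochner formulas \eqref{bochner3} and \eqref{bochner4} (arising from the symmetrised and antisymmetrised parts of $\nabla h$). But there is a genuine gap at precisely the point you flag.

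You only derive \emph{one} upper bound, $\langle\mathring{R}h,h\rangle\leq\bigl((n-2)K_{\max}-\mu\bigr)|h|^2$. Fed into \eqref{bochner3} this gives strict stability only when $\mu>\tfrac{2}{3}(n-2)K_{\max}$, and then invoking $\mu\geq(n-1)K_{\min}$ as you propose yields merely the pinching $K_{\min}>\tfrac{2(n-2)}{3(n-1)}K_{\max}$, strictly worse than $\tfrac{n-2}{3n}$. So your diagnosis of the missing ingredient --- ``a careful use of the constraint that $E$ lies between $(n-1)K_{\min}$ and $(n-1)K_{\max}$'' --- is incorrect; the paper never uses that constraint. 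What is actually needed is a \emph{second} pointwise upper bound: rewriting your eigenvalue identity via $2\lambda_i\lambda_j=\lambda_i^2+\lambda_j^2-(\lambda_i-\lambda_j)^2$ gives
\[
\langle\mathring{R}h,h\rangle=\mu|h|^2-\sum_{i<j}K_{ij}(\lambda_i-\lambda_j)^2\leq(\mu-nK_{\min})|h|^2,
\]
since $\sum_{i<j}(\lambda_i-\lambda_j)^2=n|h|^2$ for trace-free $h$. With both bounds in hand (this is Lemma~\ref{fujitani}), the paper argues by dichotomy: either $\mu>\tfrac{2}{3}(n-2)K_{\max}$ and the first bound gives $r(p)<\tfrac{1}{2}\mu$, or $\mu\leq\tfrac{2}{3}(n-2)K_{\max}<2nK_{\min}$ (the last inequality being exactly the pinching hypothesis) and the second bound gives $r(p)<\tfrac{1}{2}\mu$. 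In either case Theorem~\ref{koiso} yields strict stability. The ``short computation'' you allude to does exist, but it rests on an estimate you did not write down and a mechanism different from the one you guessed.
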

\begin{cor}[{{\cite[Proposition 3.4]{Koi78}}}]\label{stabilitywhenK<0}
 Let $(M,g)$ be an Einstein manifold with sectional curvature $K<0$. Then $(M,g)$ is strictly stable.
\end{cor}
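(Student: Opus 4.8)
The plan is to reduce the statement to a Weitzenböck-type inequality for $TT$-tensors. Since $M$ is compact, an Einstein metric is strictly stable exactly when $S''(h)<0$ for every nonzero $TT$-tensor $h$, and on $TT$-tensors $S''(h)$ is a negative multiple of $\int_M\langle\Delta_E h,h\rangle\,dV=\int_M\bigl(|\nabla h|^2-2\langle\mathring{R}h,h\rangle\bigr)\,dV$. So it suffices to show
\begin{align*}
\int_M\bigl(|\nabla h|^2-2\langle\mathring{R}h,h\rangle\bigr)\,dV>0\qquad\text{for every }TT\text{-tensor }h\neq0,
\end{align*}
and everything comes down to controlling the curvature integral $\int_M\langle\mathring{R}h,h\rangle\,dV$.

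First I would establish the basic integral identity obtained by integrating by parts and using $\delta h=0$, the Ricci identity, and the Einstein condition $\ric=\mu g$: for every $TT$-tensor $h$,
\begin{align*}
\int_M|\nabla h|^2\,dV=\tfrac12\int_M|d^\nabla h|^2\,dV+\mu\int_M|h|^2\,dV+\int_M\langle\mathring{R}h,h\rangle\,dV,
\end{align*}
where $(d^\nabla h)(X,Y,Z)=(\nabla_X h)(Y,Z)-(\nabla_Y h)(X,Z)$. Substituting this into the stability functional and then evaluating the pointwise quantity $\mu|h|^2-\langle\mathring{R}h,h\rangle$ in an orthonormal frame $e_1,\dots,e_n$ diagonalizing $h$, with eigenvalues $\lambda_1,\dots,\lambda_n$ satisfying $\sum_i\lambda_i=0$ and using the Einstein relation $\sum_{j\neq i}K(e_i,e_j)=\mu$, one arrives at
\begin{align*}
\int_M\bigl(|\nabla h|^2-2\langle\mathring{R}h,h\rangle\bigr)\,dV=\tfrac12\int_M|d^\nabla h|^2\,dV+\tfrac12\int_M\sum_{i\neq j}K(e_i,e_j)\,(\lambda_i+\lambda_j)^2\,dV.
\end{align*}
When $K\geq0$ the right-hand side is nonnegative and vanishes only for $h\equiv0$ (then $\lambda_i+\lambda_j=0$ for all $i\neq j$, which forces $\lambda\equiv0$ since $n\geq3$); this is the mechanism behind Corollary~\ref{pinching}.

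For $K<0$ the pointwise curvature term is now nonpositive, and the step I expect to be the real obstacle is to show that it is strictly dominated by the curl energy $\int_M|d^\nabla h|^2\,dV$. A purely pointwise argument cannot work: an Einstein algebraic curvature tensor with all sectional curvatures negative need not satisfy $\langle\mathring{R}h,h\rangle\leq0$, so one must genuinely use that $h$ is globally divergence-free. Here the plan is to combine the identity above with the pointwise estimate $\langle\mathring{R}h,h\rangle\leq\tfrac12\bigl(n\kappa_2-(n-2)\kappa_1\bigr)|h|^2$, valid whenever $\kappa_1\leq K\leq\kappa_2$ (write $K(e_i,e_j)=\tfrac12(\kappa_1+\kappa_2)+\varepsilon_{ij}$ with $|\varepsilon_{ij}|\leq\tfrac12(\kappa_2-\kappa_1)$, use $\sum_{j\neq i}\lambda_i\lambda_j=-|h|^2$, and bound the error by $\sum_{i\neq j}|\lambda_i|\,|\lambda_j|\leq(n-1)|h|^2$): this estimate first produces a lower bound for $\int_M|d^\nabla h|^2\,dV$ from the Weitzenböck identity, and reinserting it, the negativity of $\kappa_2$ forces the net expression to be strictly positive — this is in essence Koiso's argument. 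The residual equality case, $d^\nabla h\equiv0$ together with $\nabla h\equiv0$, is excluded because a compact Einstein manifold with $K<0$ is irreducible and hence carries no nonzero parallel $TT$-tensor. Packaging the Weitzenböck identity and the pointwise curvature estimate into one sharp inequality yields a common refinement of the known criteria from which both Corollary~\ref{pinching} and the present statement drop out.
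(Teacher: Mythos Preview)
Your approach via the antisymmetric Bochner formula is exactly the route the paper takes, but you have a sign error that makes you believe the argument is much harder than it is. The correct identity (this is \eqref{bochner4} with $D_2 h=\tfrac{1}{\sqrt2}\,d^\nabla h$) reads
\begin{align*}
(\Delta_E h,h)_{L^2}=\tfrac12\|d^\nabla h\|_{L^2}^2 - \mu\|h\|_{L^2}^2 - (\mathring{R}h,h)_{L^2}
\end{align*}
for $TT$-tensors; the $\mu$-term carries a \emph{minus} sign. A sanity check shows your sign cannot be right: with $+\mu$ your display would be nonnegative whenever $K\geq0$, but $S^2\times S^2$ is Einstein with $K\geq0$ and is unstable (the tensor $g_1-g_2$ is a negative direction). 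With the correct sign your own diagonalisation actually gives
\begin{align*}
(\Delta_E h,h)_{L^2}=\tfrac12\|d^\nabla h\|_{L^2}^2 - \tfrac12\int_M\sum_{i\neq j}K(e_i,e_j)\,(\lambda_i+\lambda_j)^2\,dV,
\end{align*}
so for $K<0$ both terms are nonnegative and the second is strictly positive unless $\lambda_i+\lambda_j=0$ for all $i\neq j$, which in dimension $n\geq3$ forces $h=0$ pointwise. That is the entire proof; the ``domination of the negative curvature term by the curl energy'' that you anticipate, and the auxiliary pointwise estimate on $\langle\mathring{R}h,h\rangle$ (whose coefficients you have also swapped: the correct bound is $\tfrac12((n-2)\kappa_2-n\kappa_1)|h|^2$), are not needed at all.

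The paper packages the curvature step slightly differently: rather than diagonalising, it feeds Fujitani's bound $\langle\mathring{R}h,h\rangle\leq((n-2)K_{\max}-\mu)|h|^2$ from Lemma~\ref{fujitani} directly into \eqref{bochner4}; since $K_{\max}<0$ one has $(n-2)K_{\max}-\mu<-\mu$, hence $(\Delta_E h,h)_{L^2}>\|D_2 h\|^2_{L^2}\geq0$. Your diagonalised version is a perfectly good alternative once the sign is fixed; the point is simply that for $K<0$ the relevant Bochner formula is coercive without any further work.
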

We consider the boundary cases of these corollaries and observe that the existence of infinitesimal Einstein deformation imposes strong conditions on the manifold: It is even-dimensional and the tangent bundle splits in two subbundles
of the same dimension which admit certain properties (Propositions \ref{splittingtheorem1} and \ref{splittingtheorem2}).

\vspace{3mm}

\textbf{Acknowledgement.} This article is based on a part of the authors PhD-thesis. The author would like to thank his advisor Christian B\"ar for helpful discussions. Moreover, the 
author thanks the Max-Planck Institute for Gravitational Physics for financial support.
\section{Preliminaries}
Let us first fix some notation and conventions. We define the Laplace-Beltrami operator acting on functions by $\Delta=-\trace \nabla^2$. For the Riemann curvature tensor, we use the sign convention such that
 $R_{X,Y}Z=\nabla^2_{X,Y}Z-\nabla^2_{Y,X}Z$. Given a fixed metric, we equip the bundle of $(r,s)$-tensor fields (and any subbundle) with the natural scalar product induced by the metric.
By $S^pM$, we denote the bundle of symmetric $(0,p)$-tensors.
 Let $\left\{e_1,\ldots,e_n\right\}$ be a local orthonormal frame. The divergence is the map $\delta:\Gamma(S^pM)\to\Gamma(S^{p-1}M)$, defined by
\begin{align*}\delta T(X_1,\ldots,X_{p-1})=-\sum_{i=1}^n\nabla_{e_i}T(e_i,X_1,\ldots,X_{p-1})
\end{align*}
and its adjoint $\delta^*\colon\Gamma(S^{p-1}M)\to \Gamma(S^pM)$ with respect to the natural $L^2$-scalar product is given by
\begin{align*}\delta^*T(X_1,\ldots,X_p)=\frac{1}{p}\sum_{i=0}^{p-1}\nabla_{X_{1+i}}T(X_{2+i},\ldots,X_{p+i}),
\end{align*}
where the sums $1+i,\ldots,p+i$ are taken modulo $p$.

 The second variation of $S$ at Einstein metrics was studied in \cite{Koi79}. For details, see also \cite[Chapter 4]{Bes08}.
 Any compact Einstein metric except the standard sphere admits the decomposition
 \begin{align}\label{decomp}T_g\mathcal{M}=\Gamma(S^2M)=C^{\infty}(M)\cdot g\oplus \delta_g^{*}(\Omega^1(M))\oplus \trace_g^{-1}(0)\cap\delta_g^{-1}(0)
 \end{align}
 and these factors are all infinite-dimensional. This splitting is orthogonal with respect to $S''$, so the second variation can be studied separately on each of these factors.

The first factor of \eqref{decomp} is the tangent space of the conformal class of $g$. It is known that $S''$ is nonnegative on volume-preserving conformal deformations.
This is due to the fact that any Einstein metric is Yamabe \cite[p.\ 329]{LeB99}, i.e.\ it is a minimizer of the (volume-normalized) total scalar curvature in its conformal class.
The second factor is the tangent space
of the orbit of the diffeomorphism group acting on $g$. By diffeomorphism invariance, $S''$ vanishes on this factor.

The tensors in the third factor are also often called transverse traceless or $TT$.
From now on, we abbreviate $TT_g=\trace_g^{-1}(0)\cap\delta_g^{-1}(0)$.
The second variation of $S$ on $TT$-tensors is given by
\begin{align*}S''(h)=-\frac{1}{2}\int_M\langle h, \nabla^*\nabla h -2\mathring{R}h\rangle\dv.
\end{align*}
Here, $\mathring{R}$ is the action of the curvature tensor on symmetric $(0,2)$-tensors, given by 
\begin{align*}\mathring{R}h(X,Y)=\sum_{i=1}^nh(R_{e_i,X}Y,e_i).
 \end{align*}
\begin{defn}We call the operator $\Delta_E:\Gamma(S^2M)\to\Gamma(S^2M)$, $\Delta_Eh=\nabla^*\nabla h -2\mathring{R}h$ the Einstein operator.
 \end{defn}
This is a self-adjoint elliptic operator and by compactness of $M$, it has a discrete spectrum. The Einstein operator preserves all components of the splitting \eqref{decomp}.
\begin{defn}\label{stability}We call a compact Einstein manifold $(M,g)$ stable, if the Einstein operator is nonnegative on $TT$-tensors and strictly stable, if it is positive on $TT$-tensors.
 We call $(M,g)$ unstable, if the Einstein operator admits negative eigenvalues on $TT$. Furthermore, elements in $\kernel(\Delta_E|_{TT})$ are called infinitesimal Einstein deformations.
\end{defn}
\begin{rem}
If $g_t$ is a nontrivial curve of Einstein metrics through $g=g_0$ orthogonal to $\R\cdot(g\cdot \Diff(M))$, then $\dot{g}_0$ is an infinitesimal Einstein deformation.
Evidently, an Einstein manifold is isolated (or rigid) in the space of Einstein structures if $\Delta_E|_{TT}$ has trivial kernel.
\end{rem}
\begin{defn}An infinitesimal Einstein deformation $h$ is called integrable if there exists a curve of Einstein metrics tangent to $h$.
 \end{defn}

 \section{Einstein deformations of Bieberbach manifolds}\label{bieberbach}
 Bieberbach manifolds are flat\index{flat} connected compact manifolds. It is well known that any Bieberbach manifold\index{Bieberbach!manifold}
 is isometric\index{isometric} to $\R^n/G$, where $G$ is a suitable subgroup\index{subgroup} of the Euclidean motions\index{Euclidean motions} $E(n)=O(n)\ltimes \R^n$.
 We call such groups Bieberbach groups\index{Bieberbach!group}.
 For every element $g\in E(n)$, there exist unique $A\in O(n)$ and $a\in\R^n$ such that
 $gx=Ax+a$ for all $x\in\R^n$, and we write $g=(A,a)$.
 There exist homomorphisms $r\colon E(n)\to O(n)$ and $t\colon\R^n\to E(n)$, defined by $r(A,a)=A$ and $t(a)=(1,a)$.
 Let $G$ be a Bieberbach group. 
The subgroup $r(G)\subset O(n)$ is called the holonomy\index{holonomy} of $G$ since its natural representation on $\R^n$ is equivalent to the holonomy representation of $\R^n/G$ (see e.g.\ \cite[pp.\ 50-52]{Cha86}).
 
 We call two Bieberbach manifolds $M_1$ and $M_2$ affinely equivalent\index{affinely equivalent} if there exists a diffeomorphism $F:M_1\to M_2$
 whose lift to the universal coverings\index{universal covering} $\pi_1\colon\R^n\to M_1$, $\pi_2\colon\R^n\to M_2$ is an affine map \index{affine map}$\alpha\colon\R^n\to\R^n$
 such that
\begin{align*}\pi_2\circ\alpha=F\circ\pi_1.
\end{align*}

 If $M_1$ and $M_2$ are affinely equivalent, the corresponding Bieberbach groups $G_1$ and $G_2$ are isomorphic via $\varphi:G_1\to G_2$,
 $\varphi(g)=\alpha g\alpha^{-1}$. Conversely, if two Bieberbach groups $G_1$ and $G_2$ are isomorphic, there exists an affine map\index{affine map}
 $\alpha$ such that the isomorphism\index{isomorphism} is given by $g\mapsto \alpha g\alpha^{-1}$ (see \cite[Theorem 3.2.2]{Wol11}). The map $\alpha$ descends to a diffeomorphism
 $F:M_1\to M_2$ and $M_1$ and $M_2$ are affinely equivalent via $F$.

 Now we want to determine whether a Bieberbach manifold has infinitesimal Einstein deformations.
 Any Bieberbach manifold is stable since
 \begin{align*}(\Delta_Eh,h)_{L^2}=(\nabla^*\nabla h,h)_{L^2}=\left\|\nabla h\right\|_{L^2}^2\geq0.
 \end{align*}
 Furthermore, any infinitesimal Einstein deformation is parallel. 
 In the following, we will compute the dimension of the kernel of $\Delta_E=\nabla^*\nabla$ in terms of the holonomy. 
The following lemma is a consequence of the holonomy principle.\index{holonomy!principle}
\begin{lem}[\cite{Die13}, Proposition 4.2]\label{holonomy}
 Let $(M,g)$ be a connected Riemannian manifold and let $h$ be a symmetric $(0,2)$-tensor field. Let $p\in M$ and let $T_pM=(E_1)_p\oplus\ldots\oplus (E_k)_p$ be a decomposition into irreducible $Hol_p(M,g)$ representations and let
\begin{align*}
TM=E_1\oplus\ldots\oplus E_k
 \end{align*}
be the decomposition of the tangent bundle obtained by parallel transport of the $(E_i)_p$. Then $\nabla h=0$ if and only if $h=\sum_{i=1}^k\lambda_i g_i$ where $\lambda_i\in\R$ and $g_i$ is the metric restricted to $E_i$.
\end{lem}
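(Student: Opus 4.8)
This is an instance of the \emph{holonomy principle}, so the plan is to convert the equation $\nabla h=0$ into a purely linear-algebraic statement about $T_pM$ as a $Hol_p(M,g)$-representation and then to classify the invariant symmetric bilinear forms via Schur's lemma.

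For the implication ``$\Leftarrow$'', I would first record that each $E_i$ is a parallel subbundle: since $(E_i)_p$ is $Hol_p(M,g)$-invariant, its parallel transport to an arbitrary point $q\in M$ is independent of the path chosen from $p$ to $q$, so $E_i$ is a well-defined subbundle and $\nabla$ maps $\Gamma(E_i)$ into $\Gamma(T^*M\otimes E_i)$. Because $g$ is parallel and $TM=\bigoplus_i E_i$ is an orthogonal sum of parallel subbundles, each orthogonal projection $\pi_i\colon TM\to E_i$ is parallel, hence so is $g_i=g(\pi_i\,\cdot\,,\pi_i\,\cdot\,)$, and therefore so is any combination $\sum_i\lambda_i g_i$ with $\lambda_i\in\R$.

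For ``$\Rightarrow$'', suppose $\nabla h=0$. By the holonomy principle a parallel tensor field on the connected manifold $M$ is determined by its value at $p$, and the map $h\mapsto h_p$ identifies parallel symmetric $(0,2)$-tensor fields with the $Hol_p(M,g)$-invariant elements of $S^2T^*_pM$. Identifying $T_pM$ with $T^*_pM$ via $g_p$, decompose $h_p=\sum_{i,j}h_{ij}$ according to $T^*_pM=\bigoplus_i(E_i)^*_p$; each $h_{ij}$ is again $Hol_p(M,g)$-invariant, i.e.\ corresponds to an equivariant map $(E_i)_p\to(E_j)_p$. By Schur's lemma $h_{ij}=0$ whenever $(E_i)_p\not\cong(E_j)_p$, while on a single irreducible summand the invariant symmetric bilinear forms form a one-dimensional space spanned by $g|_{(E_i)_p}$: the commutant of an irreducible real representation is $\R$, $\C$ or $\mathbb{H}$, and one may choose its imaginary units to be skew-symmetric with respect to the invariant inner product, so its self-adjoint part — which parametrizes the invariant symmetric forms — reduces to $\R$. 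Hence $h_{ii}=\lambda_i\,g|_{(E_i)_p}$ for suitable $\lambda_i\in\R$, and since $g_i$ is precisely the parallel field extending $g|_{(E_i)_p}$, the holonomy principle gives $h=\sum_i\lambda_i g_i$.

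The point that needs genuine care is the case of repeated irreducible summands, where a priori ``off-diagonal'' invariant forms $h_{ij}$ with $i\neq j$ could occur. I expect to handle this by grouping the isomorphic summands into isotypic blocks, noting that the induced invariant symmetric form on such a block is encoded by a symmetric (respectively Hermitian or quaternionic-Hermitian) matrix over the commutant, and then replacing the given decomposition by one adapted to $h$ in which this matrix is diagonal. After this reduction the argument of the previous paragraph applies verbatim and produces the claimed expression $h=\sum_i\lambda_i g_i$; note that the implication ``$\Leftarrow$'' already holds for any irreducible decomposition.
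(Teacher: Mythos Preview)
Your approach is essentially the paper's: reduce via the holonomy principle to the classification of $Hol_p(M,g)$-invariant self-adjoint endomorphisms of $T_pM$, then apply Schur's lemma and extend by parallel transport. You are in fact more careful than the paper, which simply asserts $h_p=\sum_i\lambda_i(\mathrm{pr}_i)_p$ without discussing the commutant type ($\R$, $\C$, $\mathbb{H}$) or the possible off-diagonal contributions between isomorphic summands; the paper only addresses the latter issue afterwards, in the discussion leading to Proposition~\ref{paralleltensors}, and your remedy of passing to a decomposition adapted to $h$ within each isotypic block is exactly what is needed to make the lemma true as stated.
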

\begin{proof}Consider $h$ as an endomorphism on $TM$ and suppose that $\nabla h=0$. By the holonomy principle, $h_p$ commutes with the holonomy representation, i.e.\ $h_p\circ \rho(g)=\rho(g)\circ h_p$ for all $g\in Hol_p(M,g)$.
 By Schur's lemma, $h_p=\sum_{i=1}^k\lambda_k(\mathrm{pr}_i)_p$, where $(\mathrm{pr}_i)_p:T_pM\to(E_i)_p$ is the projection map. Let $\mathrm{pr}_i:TM\to E_i$ be the global projection map.
 It follows that $h=\sum_{i=1}^k\lambda_i\mathrm{pr}_i$, since we obtain $\mathrm{pr}_i$ from $(\mathrm{pr}_i)_p$ via parallel transport.
The converse is clear.
\end{proof}
\begin{cor}\label{holonomy2}
 Let $(M,g)$ be a connected Riemannian manifold. Then there exists a traceless symmetric $(0,2)$-tensor field with $\nabla h=0$ if and only if the holonomy of $(M,g)$ is reducible.
\end{cor}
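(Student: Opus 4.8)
The plan is to read off the statement directly from Lemma~\ref{holonomy}, where the existence claim should be understood as the existence of a \emph{nonzero} traceless parallel symmetric $(0,2)$-tensor (the zero tensor being a trivial solution). Fix $p\in M$ and let $TM=E_1\oplus\ldots\oplus E_k$ be the decomposition into parallel subbundles coming from the irreducible $Hol_p(M,g)$-summands of $T_pM$, and write $n_i:=\dimn E_i$, so $\sum_{i=1}^k n_i=n$. By Lemma~\ref{holonomy}, every parallel symmetric $(0,2)$-tensor has the form $h=\sum_{i=1}^k\lambda_i g_i$ with $\lambda_i\in\R$ and $g_i$ the metric restricted to $E_i$. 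Since $\trace g_i=n_i$, such an $h$ is traceless exactly when $\sum_{i=1}^k\lambda_i n_i=0$.

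For the forward implication I argue by contraposition: if the holonomy is irreducible then $k=1$, so every parallel symmetric $(0,2)$-tensor equals $\lambda g$ for some $\lambda\in\R$, and this is traceless only for $\lambda=0$; hence no nonzero traceless parallel tensor exists. For the converse, suppose the holonomy is reducible, so $k\geq 2$. Then $h:=g_1-\tfrac{n_1}{n}\,g$ is parallel (both $g_1$ and $g$ are parallel), traceless (its trace is $n_1-\tfrac{n_1}{n}\cdot n=0$), and nonzero, because on $E_2\oplus\ldots\oplus E_k$ it restricts to $-\tfrac{n_1}{n}\,g$, which does not vanish since $n_1>0$ and $E_2\oplus\ldots\oplus E_k$ is a nontrivial subbundle. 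This produces the desired tensor and completes the equivalence.

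There is no genuine obstacle here beyond bookkeeping: one must read $g_i$ as the metric restricted to $E_i$ (so that $\trace g_i=\dimn E_i$ rather than $n$) and check that the candidate tensor in the reducible case is honestly nonzero; everything substantive has already been done in Lemma~\ref{holonomy} via Schur's lemma and the holonomy principle.
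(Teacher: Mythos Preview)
Your proof is correct and follows essentially the same route as the paper: both deduce the corollary directly from Lemma~\ref{holonomy}. The paper's one-line argument phrases the forward direction via the observation that a nonzero traceless symmetric tensor must have at least two distinct eigenvalues (hence $k\geq 2$ in the decomposition), whereas you compute the trace condition $\sum_i \lambda_i n_i=0$ explicitly and give a concrete nonzero tensor $g_1-\tfrac{n_1}{n}g$ for the converse; these are just two ways of saying the same thing.
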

\begin{proof}This follows from Lemma \ref{holonomy} since any traceless symmetric $(0,2)$-tensor field admits at least two distinct eigenvalues.
 \end{proof}

 \begin{cor}A Bieberbach manifold\index{Bieberbach!manifold} $M=\R^n/G$ is strictly stable if and only if the subgroup $r(G)\subset O(n)$ acts irreducibly on $\R^n$.
 \end{cor}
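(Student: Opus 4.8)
The plan is to reduce strict stability to a purely holonomy-theoretic statement, using three ingredients that are already at hand: that every Bieberbach manifold is stable, that on a flat manifold $\Delta_E=\nabla^*\nabla$, and the criterion (recorded in the introduction) that an Einstein manifold is strictly stable if and only if it is stable and admits no infinitesimal Einstein deformations. So it suffices to decide when the space of infinitesimal Einstein deformations is trivial.

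First I would identify that space with the space of parallel traceless symmetric $2$-tensors. Since $(M,g)$ is flat, $\mathring{R}=0$, hence $\Delta_E|_{TT}=\nabla^*\nabla|_{TT}$, so $h\in\kernel(\Delta_E|_{TT})$ forces $\|\nabla h\|_{L^2}^2=(\nabla^*\nabla h,h)_{L^2}=0$, i.e.\ $h$ is parallel. Conversely, a parallel symmetric $2$-tensor is automatically divergence-free (each $\nabla_{e_i}h$ vanishes, so $\delta h=0$), and a parallel trace-free symmetric $2$-tensor therefore lies in $TT_g$ and in $\kernel(\Delta_E|_{TT})$. Thus the infinitesimal Einstein deformations are exactly the nonzero parallel trace-free symmetric $(0,2)$-tensor fields.

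Next I would combine this with stability and the strict-stability criterion: $(M,g)$ is strictly stable if and only if there is no nonzero parallel trace-free symmetric $(0,2)$-tensor field. By Corollary \ref{holonomy2} this is the case precisely when the holonomy of $(M,g)$ is irreducible, and the holonomy representation of $M=\R^n/G$ is by construction equivalent to the natural representation of $r(G)\subset O(n)$ on $\R^n$; this yields the stated equivalence. The argument is essentially bookkeeping — the only point requiring a word of care is checking that "parallel" already supplies the trace-free and divergence-free conditions needed to land inside $TT_g$, so that Corollary \ref{holonomy2} applies verbatim. There is no genuine obstacle: all the analytic content sits in the earlier observation that infinitesimal Einstein deformations on flat manifolds are parallel, together with the holonomy principle of Lemma \ref{holonomy}.
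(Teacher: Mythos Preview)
Your argument is correct and follows essentially the same route as the paper: identify infinitesimal Einstein deformations on a flat manifold with parallel trace-free symmetric $2$-tensors, invoke Corollary~\ref{holonomy2}, and use that the holonomy representation of $\R^n/G$ coincides with the natural action of $r(G)$ on $\R^n$. Your write-up is a bit more explicit than the paper's (you spell out the converse direction, that a parallel tensor is automatically divergence-free and hence lies in $TT_g$), but the approach is identical.
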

 \begin{proof}Since the canonical representation of $r(G)$ on $\R^n$ is equivalent to the holonomy\index{holonomy} representation of $M$
and any infinitesimal Einstein deformation is parallel, the assertion is immediate from Corollary \ref{holonomy2}.
 \end{proof}

 For the moment, let $(M,g)$ be an arbitrary Riemannian manifold.
 We compute the dimension of the space of parallel symmetric $(0,2)$-tensors on $(M,g)$ in terms of the holonomy.
 Let $TM=E_1\oplus\ldots\oplus E_k$ be a parallel orthogonal splitting of the tangent bundle in irreducible\index{irreducible} components.
 Then a parallel splitting\index{parallel!splitting} of the bundle of symmetric $(0,2)$-tensors is given by
 \begin{align}\label{symmetrictensorsplitting}T^*M\odot T^*M=\bigoplus_{i,j=1}^k E_i^*\odot E_j^*=\bigoplus_{i=1}^k \odot^2 E_i^*\oplus\bigoplus_{i<j}^k E_i^*\odot E_j^*.
\end{align}
 Here, $E_i^*$ is the image of $E_i$ under the musical isomorphism\index{musical isomorphism} and $\odot$ denotes the symmetric tensor product\index{symmetric tensor product}\index{$\odot$, symmetric tensor product}.
 We now search the parallel sections in each of these summands. First suppose that $h\in \Gamma(\odot^2 E_i)$ is parallel.
 Considered as an endomorphism\index{endomorphism} on $TM$, it induces a parallel endomorphism $h:E_i\to E_i$.
 By the proof of Lemma \ref{holonomy}, its eigensections\index{eigensection} form a splitting of the bundle $E_i$. Since $E_i$ is irreducible\index{irreducible},
 $h=\lambda g_i$ where $\lambda\in\R$ and $g_i$ is the metric restricted to $E_i$.

Now we consider the second component of the splitting (\ref{symmetrictensorsplitting}).
 Sections of $E^*_i\odot E^*_j$, considered as endomorphisms on $TM$, are sections of $\mathrm{End}(E_i\oplus E_j)$\index{$\mathrm{End}$, endomorphism bundle} which are of the form
 \begin{align*}h=\begin{pmatrix}0 & A^*\\
              A & 0
           \end{pmatrix},
 \end{align*}
 where $A\in\Gamma(\mathrm{End}(E_i,E_j))$ and $A^*$ is its adjoint\index{adjoint map}. If $h$ is parallel, $A$ is also parallel.
 Fix a point $p$ and consider a linear map $A_p:(E_i)_p\to (E_j)_p$.\index{$(E_i)_p$, fiber of the bundle $E_i$ at $p$}
 By the holonomy principle, $A_p$ can be extended to a parallel endomorphism $A:E_i\to E_j$ if and only if $A_p$ commutes with 
  the restricted holonomy representatios $\rho(Hol_p(M,g))|_{E_i}$
 and $\rho(Hol_p(M,g))|_{E_j}$. 
 Since these representations are irreducible,
 Schur's lemma implies that $A_p$ is either zero or an isomorphism. In the latter case, these representations are equivalent via $A_p$ and any other equivalence is a multiple of $A_p$. 
%

 In summary, we have shown that the dimension of the space of parallel sections in $E^*_i\odot E^*_j$ equals $1$ if the holonomy representations\index{holonomy!representation} restricted to
 $E_i$ and $E_j$ are equivalent and zero otherwise. Summing over all components of the splitting \eqref{symmetrictensorsplitting}, we obtain
 \begin{prop}\label{paralleltensors}Let $(M,g)$ be a Riemannian manifold and let $Hol(M,g)$ be its holonomy representation. Let 
 \begin{align*}Hol(M,g)\cong(\rho_{1})^{i_1}\oplus\ldots \oplus (\rho_l)^{i_l}
 \end{align*}
 be an irreducible decomposition of $Hol(M,g)$. Then the dimension of parallel symmetric $(0,2)$-tensors is equal to
 \begin{align*}\dimn(\Parallel(S^2M))=\sum_{j=1}^l \frac{i_j(i_j+1)}{2}.
\end{align*}
 \end{prop}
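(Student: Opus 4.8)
The plan is simply to add up, summand by summand, the contributions of the parallel decomposition \eqref{symmetrictensorsplitting}, using the two facts already established in the discussion above: every parallel section of $\odot^2E_i^*$ is a multiple of $g_i$, so that $\dimn(\Parallel(\odot^2E_i^*))=1$; and $\dimn(\Parallel(E_a^*\odot E_b^*))$ equals $1$ if the holonomy representations restricted to $E_a$ and $E_b$ are equivalent and equals $0$ otherwise. Since the splitting \eqref{symmetrictensorsplitting} is a parallel bundle decomposition, $\nabla$ commutes with the projections onto its summands, so a symmetric $(0,2)$-tensor is parallel if and only if each of its components is parallel. Hence
\begin{align*}
\dimn(\Parallel(S^2M)) = \sum_{i=1}^k \dimn(\Parallel(\odot^2 E_i^*)) + \sum_{1\le a<b\le k}\dimn(\Parallel(E_a^*\odot E_b^*)),
\end{align*}
and the first sum equals $k=\sum_{j=1}^l i_j$.

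Next I would organize the off-diagonal terms by isomorphism type. Relabel the irreducible subbundles $E_1,\dots,E_k$ so that the members of the $j$-th isotypical block are mutually equivalent to $\rho_j$; this block has exactly $i_j$ members and $\sum_{j=1}^l i_j=k$. By the dichotomy recalled above, a pair $a<b$ contributes $1$ precisely when $E_a$ and $E_b$ belong to the same block and contributes $0$ otherwise, and the number of such pairs inside the $j$-th block is $\binom{i_j}{2}=\frac{i_j(i_j-1)}{2}$. Therefore the second sum equals $\sum_{j=1}^l\binom{i_j}{2}$, and combining the two contributions gives
\begin{align*}
\dimn(\Parallel(S^2M)) = \sum_{j=1}^l i_j + \sum_{j=1}^l\frac{i_j(i_j-1)}{2} = \sum_{j=1}^l\frac{i_j(i_j+1)}{2},
\end{align*}
which is the asserted formula.

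Finally I would dispose of the two minor points of care, neither of which is a genuine obstacle. First, the irreducible decomposition of $Hol(M,g)$ is not unique once some $i_j>1$; but the left-hand side $\dimn(\Parallel(S^2M))$ is manifestly independent of all choices, and passing from one decomposition to another merely permutes the summands of \eqref{symmetrictensorsplitting} without changing their dimensions, so the count is well posed. Second, one should note that the parallel subbundles $E_i$ are globally well-defined: a $\mathrm{Hol}_p(M,g)$-invariant decomposition of $T_pM$ is preserved by parallel transport around every loop based at $p$, hence parallel transport along paths yields subbundles that do not depend on the chosen path. With these remarks the argument is complete. The only nontrivial input — two applications of Schur's lemma — has already been carried out before the statement, so the step here is purely combinatorial bookkeeping and there is no hard part to isolate.
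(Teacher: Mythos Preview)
Your proposal is correct and follows exactly the paper's own approach: the paper proves the proposition by the sentence ``Summing over all components of the splitting \eqref{symmetrictensorsplitting}, we obtain'' after establishing the same two facts you invoke, and you have simply written out that sum explicitly. The combinatorial bookkeeping you add (grouping by isotypical block and combining $\sum_j i_j + \sum_j \binom{i_j}{2}$) is precisely the computation the paper leaves implicit.
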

 Let us now go back to the special case of a Bieberbach manifold $(\R^n/G,g)$ and recall that infinitesimal Einstein deformations are precisely the traceless parallel symmetric $(0,2)$-tensors.
 Using the fact that the canonical representation $r:G\to O(n)$ is equivalent to the holonomy representation of $M$, we obtain Theorem \ref{bieberbachdeformation}.

  \begin{rem}Any infinitesimal Einstein deformations on a Bieberbach manifold if integrable since $g+th$ is a curve of flat metrics, if $g$ is flat and $h$ is parallel.
 \end{rem}
 Recall that two Bieberbach manifolds $M_1$ and $M_2$ are called affinely equivalent\index{affinely equivalent} if there exists a diffeomorphism $F:M_1\to M_2$ whose lift to the universal coverings\index{universal covering} $\pi_1\colon\R^n\to M_1$, $\pi_2\colon\R^n\to M_2$
 is an affine map $\alpha\in GL(n)\ltimes \R^n$ such that $F\circ\pi_1=\pi_2\circ \alpha$.
 Since $\pi_1,\pi_2$ are local isometries\index{local isometry} and $\alpha$ is affine, the map $F$ is parallel.
 Therefore, the induced map $F_*\colon\Gamma(S^2M_1)\to \Gamma(S^2M_2)$ maps parallel tensor fields\index{parallel!tensor field} on $M_1$ isomorphically to parallel tensor fields on $M_2$.
 It follows that the dimension of infinitesimal Einstein deformations only depends on the affine equivalence class\index{affine equivalence class} of $M$.

 For any $n\in\N$ the number of affine equivalence classes of $n$-dimensional Bieberbach manifolds\index{Bieberbach!manifold} is finite \cite{Bie12}.
In dimension $3$, a classification of all Bieberbach manifolds up to affine equivalence is known. In fact, there exist $10$ Bieberbach $3$-manifolds where
 six of them are orientable\index{orientable} and the others are non-orientable\index{non-orientable}. We describe the corresponding Bieberbach groups\index{Bieberbach!group} in the following. Moreover, 
 we will compute the dimension of infinitesimal Einstein deformations explicitly. Let $\left\{e_1,e_2,e_3\right\}$ be the standard basis\index{standard basis} of
 $\R^3$, let $R(\varphi)$ be the rotation matrix\index{rotation matrix} of rotation of $\R^3$ about the $e_1$-axis through $\varphi$ and let $E$ be the reflection
 matrix\index{reflection matrix} at the $e_1$-$e_2$-plane, i.e.\
 \begin{align*}e_1=\begin{pmatrix}1\\0\\0\end{pmatrix},\qquad e_2=\begin{pmatrix}0\\1\\0\end{pmatrix},\qquad e_3=\begin{pmatrix}0\\0\\1\end{pmatrix},
 \\ R(\varphi)=\begin{pmatrix}1 & 0 & 0 \\ 
 0 & \cos(\varphi) & -\sin(\varphi) \\ 
 0 & \sin(\varphi) & \cos(\varphi)\end{pmatrix},\qquad
 E=\begin{pmatrix} 1 & 0 & 0 \\
                   0 & 1 & 0 \\
                   0 & 0 & -1
 \end{pmatrix}.
 \end{align*}
 Let furthermore $t_i=(I,e_i)$, $i\in\left\{1,2,3\right\}$ and $I$ be the identity map. Then the Bieberbach groups\index{Bieberbach!group} can be described as follows (see  e.g.\ \cite[Lemma 2.1]{Kan06}):
 \begin{center}
 \begin{tabular}{|l | l |}
\hline
   & generators\index{generator} of $G_i$ \\
 \hline
 $G_1$ & $t_1,t_2,t_3$ \\
  $G_2$ & $t_1,t_2,t_3$ and $\alpha=(R_\pi,\frac{1}{2}e_1)$ \\
  $G_3$ & $t_1,s_1=(I,R_\frac{2\pi}{3}e_2),s_2=(I,(R_\frac{4\pi}{2}e_2))$ and $\alpha=(R_{\frac{2\pi}{3}},\frac{1}{3}e_1)$ \\
  $G_4$ & $t_1,t_2,t_3$ and $\alpha=(R_\frac{\pi}{2},\frac{1}{4}e_1)$ \\
  $G_5$ & $t_1,s_1=(I,R_\frac{\pi}{3}e_2),s_2=(R(\frac{2\pi}{3})e_2,I)$ and $\alpha=(R_\frac{\pi}{3},\frac{1}{6}e_1)$ \\
  $G_6$ & $t_1,t_2,t_3,\alpha=(R_\pi,\frac{1}{2}e_1)$, \\
 & $\beta=(-E\cdot R_\pi,\frac{1}{2}(e_2+e_3))$ and $\gamma=(-E,\frac{1}{2}(e_1+e_2+e_3))$ \\
  $G_7$ & $t_1,t_2,t_3$ and $\alpha=(E,\frac{1}{2}e_1)$ \\
  $G_8$ & $t_1,t_2,s=(I,\frac{1}{2}(e_1+e_2)+e_3)$ and $\alpha=(E,\frac{1}{2}e_1)$ \\
  $G_9$ & $t_1,t_2,t_3,\alpha=(R_\pi,\frac{1}{2}e_1)$ and $\beta=(E,\frac{1}{2}e_2)$ \\
  $G_{10}$ & $t_1,t_2,t_3,\alpha=(R_\pi,\frac{1}{2}e_1)$ and $\beta=(E,\frac{1}{2}(e_2+e_3))$\\
\hline 
\end{tabular}
 \end{center}
 The manifolds $M/G_i$ are orientable\index{orientable} if $1\leq i\leq 6$ and non-orientable if $7\leq i\leq 10$.
 Now we extract the generators of the holonomy\index{holonomy} and use Theorem \ref{bieberbachdeformation} to compute the dimension of $\kernel(\Delta_E|_{TT})$:
 \begin{center}
  \begin{tabular}{|l | l | l|}
 \hline
   & generators of $r(G_i)$ & $\dimn(\kernel\Delta_E|_{TT})$\\
 \hline
 $G_1$ & I & $5$\\
 $G_2$ & $R_\pi$ & $3$ \\
 $G_3$ & $R_\frac{2\pi}{3}$ & $1$ \\
 $G_4$ & $R_\frac{\pi}{2}$ & $1$ \\
 $G_5$ & $R_\frac{\pi}{3}$ & $1$ \\
 $G_6$ & $\left\{R_\pi,-E\cdot R_\pi,-E\right\}$ & $2$ \\
 $G_7$ & $E$ & $3$ \\
 $G_8$ & $E$ & $3$ \\
 $G_9$ & $\left\{R_\pi,E\right\}$ & $2$\\
 $G_{10}$ & $\left\{R_\pi,E\right\}$ & $2$\\
 \hline
 \end{tabular}
 \end{center}

 This table in particular shows that each three-dimensional Bieberbach manifold\index{Bieberbach!manifold} has infinitesimal Einstein deformations
 and hence, it is also deformable as an Einstein space by our remark above. In fact, the moduli space of Einstein structures\index{moduli space!of Einstein structures} on these manifolds concides with the moduli space of flat structures
\index{moduli space!of flat structures}.
 An explicit desciption of these moduli spaces is given in \cite[Theorem 4.5]{Kan06}.
\begin{rem}
 It seems possible but it is not known if there are Bieberbach manifolds which are isolated as Einstein spaces.
\end{rem}
 \section{The Einstein operator on product manifolds}\label{productmanifolds}
 Let $(M,g_1)$ and $(N,g_2)$ be Einstein manifolds and consider the product\index{product} manifold $(M\times N,g_1+ g_2)$.
 It is Einstein if and only if the components have the same Einstein constant $\mu$. In this case, the Einstein constant
 of the product is also $\mu$. We want to determine if a product Einstein space is stable or not. 
 This was worked out in \cite{AMo11} in the case, where the Einstein constant is negative. We study the general case. 
 
In the following, we often lift tensors on the factors $M,N$ to tensors on $M\times N$ by pulling back along the projection maps\index{projection}. In order to avoid notational complications, we drop the explicit reference to the projections throughout the section.

At first, we consider the spectrum of the Einstein operator on the product space.
 \begin{prop}[\cite{AMo11}]\label{productspectrum}Let $\Delta_E^{M\times N}$ be the Einstein operator with respect to the product metric acting on $\Gamma(S^2(M\times N))$.
 Then the spectrum\index{spectrum} of $\Delta_E^{M\times N}$ is given by
 \begin{align*}\spectrum(\Delta_E^{M\times N})=(\spectrum(\Delta_E^M)+\spectrum(\Delta_0^N))&\cup(\spectrum(\Delta_E^N)+\spectrum(\Delta_0^M))\\
               &\cup (\spectrum(\Delta_1^M)+\spectrum(\Delta_1^N)).
 \end{align*}
 Here, $\Delta_0^M$, $\Delta_0^N$, $\Delta_1^M$, $\Delta_1^N$\index{$\Delta_1$, connection Laplacian on $\Omega^1(M)$} denote the connection Laplacians\index{Laplacian!connection} on functions and $1$-forms\index{1@$1$-form} with respect to the
 metrics on $M$ and $N$, respectively.
 \end{prop}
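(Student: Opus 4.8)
The plan is to decompose the bundle of symmetric $2$-tensors on the product according to the splitting $T(M\times N)=TM\oplus TN$, to verify that the Einstein operator $\Delta_E^{M\times N}$ preserves each summand of the resulting decomposition, to identify its restriction to each summand as a ``separated'' operator, and then to read off the spectrum from the standard spectral theory of operators of the form $A\otimes 1+1\otimes B$ on a product of compact manifolds. Concretely, pulling tensors back along the two projections and symmetrising the mixed factor, one has
\begin{align*}
S^2(M\times N)=S^2M\oplus\left(T^*M\otimes T^*N\right)\oplus S^2N.
\end{align*}
Since the Levi-Civita connection of the product metric is the product connection, it preserves the splitting $TM\oplus TN$ together with the induced splitting of $S^2$, so $\nabla^*\nabla^{M\times N}$ maps each summand into itself. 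The curvature tensor of the product satisfies $R^{M\times N}=R^M+R^N$ with vanishing mixed components; hence $\mathring{R}^{M\times N}$ preserves the same splitting. Therefore so does $\Delta_E^{M\times N}$.

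Next I would compute the restriction to each summand by separation of variables. For $k\in\Gamma(S^2M)$ and $f\in C^\infty(N)$, both pulled back, a computation in a local orthonormal frame adapted to the product --- using that $k$ is parallel in the $N$-directions and $f$ is constant in the $M$-directions, so that all cross-terms $\nabla f\cdot\nabla k$ drop out --- gives $\nabla^*\nabla^{M\times N}(fk)=f\,\nabla^*\nabla^Mk+(\Delta_0^Nf)\,k$ and $\mathring{R}^{M\times N}(fk)=f\,\mathring{R}^Mk$, whence
\begin{align*}
\Delta_E^{M\times N}(fk)=f\,\Delta_E^Mk+(\Delta_0^Nf)\,k;
\end{align*}
the summand $S^2N$ is treated symmetrically. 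On the mixed summand, the vanishing of the mixed components of $R^{M\times N}$ forces $\mathring{R}^{M\times N}(\omega\odot\eta)=0$ for $\omega\in\Omega^1(M)$ and $\eta\in\Omega^1(N)$, and the same frame computation yields $\Delta_E^{M\times N}(\omega\odot\eta)=(\Delta_1^M\omega)\odot\eta+\omega\odot(\Delta_1^N\eta)$.

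Finally, since $M$ and $N$ are compact, each of $\Delta_E^M$ and $\Delta_E^N$ (acting on all symmetric $2$-tensors), together with $\Delta_0^M,\Delta_0^N,\Delta_1^M,\Delta_1^N$, is a self-adjoint elliptic operator with discrete spectrum and a complete orthonormal system of eigensections in the appropriate $L^2$-space. Under the Hilbert tensor product identifications, $L^2(S^2(M\times N))$ splits as the orthogonal sum of $L^2(S^2M)\,\hat\otimes\,L^2(N)$, $L^2(\Omega^1M)\,\hat\otimes\,L^2(\Omega^1N)$ and $L^2(M)\,\hat\otimes\,L^2(S^2N)$, and by the previous paragraph $\Delta_E^{M\times N}$ acts on these three pieces as $\Delta_E^M\otimes 1+1\otimes\Delta_0^N$, $\Delta_1^M\otimes 1+1\otimes\Delta_1^N$ and $\Delta_0^M\otimes 1+1\otimes\Delta_E^N$ respectively. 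Taking tensor products of the eigensection bases of the factors produces a complete eigensection basis of $\Delta_E^{M\times N}$ whose eigenvalues are the pairwise sums of the eigenvalues on the factors; since the three summands are $\Delta_E^{M\times N}$-invariant and span $L^2(S^2(M\times N))$, taking the union over them gives exactly the claimed description of $\spectrum(\Delta_E^{M\times N})$. The main obstacle is the second step: carrying out the adapted-frame computation of $\nabla^*\nabla^{M\times N}$ and $\mathring{R}^{M\times N}$ on pulled-back tensors and confirming that all cross-terms vanish, in particular that $\mathring{R}^{M\times N}$ annihilates the mixed summand. Everything else is routine once the operator has been brought into this separated form.
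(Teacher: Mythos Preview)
Your proposal is correct and follows essentially the same approach as the paper: decompose $S^2(M\times N)$ into the three summands $S^2M$, $T^*M\odot T^*N$, $S^2N$, verify that $\Delta_E^{M\times N}$ acts in separated form on each, and conclude by taking tensor products of eigenbases on the factors. The paper's version is terser---it cites \cite[Lemma 3.1]{AMo11} for the completeness of the tensor-product basis and calls the eigenvalue computations ``straightforward''---whereas you spell out why the splitting is preserved and why the cross-terms and $\mathring{R}$ on the mixed part vanish, but the substance is the same.
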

 \begin{proof}Let $\left\{v_i\right\}$, $\left\{\alpha_i\right\}$, $\left\{h_i\right\}$ be complete orthonormal systems of symmetric $p$-eigentensors
 ($p=0,1,2$) of the operators $\Delta_0^M$, $\Delta_1^M$, $\Delta_E^M$, respectively. Let $\lambda_i^{(0)},\lambda_i^{(1)},\lambda_i^{(2)}$ be the corresponding eigenvalues.
 Let $\left\{w_i\right\}$, $\left\{\beta_i\right\}$, $\left\{k_i\right\}$ be complete orthonormal systems of symmetric $(0,p)$-eigentensors ($p=0,1,2$)
 of the operators $\Delta_0^N$, $\Delta_1^N$, $\Delta_E^N$, respectively. Let $\kappa_i^{(0)},\kappa_i^{(1)},\kappa_i^{(2)}$ be their eigenvalues.
 By \cite[Lemma 3.1]{AMo11}, the tensor products $v_ik_j$, $w_ih_j$, $\alpha_i\odot\beta_j$ form a complete orthonormal system in
 $\Gamma(S^2(M\times N))$. Straightforward calculations show that 
 \begin{align*}\Delta_E^{M\times N}(v_ik_j)&=(\lambda^{(0)}_i+\kappa^{(2)}_j)v_ik_j,\\
 \Delta_E^{M\times N}(\alpha_i\odot\beta_j)&=(\lambda^{(1)}_i+\kappa^{(1)}_j)\alpha_i\odot\beta_j,\\
 \Delta_E^{M\times N}(w_ih_j)&=(\kappa^{(0)}_i+\lambda^{(2)}_j)w_ih_j,
 \end{align*}
 from which the assertion follows.
 \end{proof}
Another operator closely related to the Einstein operator is the Lichnerowicz Laplacian acting on $\Gamma(S^2M)$, defined by
\begin{align}\label{LLdef}
 \Delta_L h=\nabla^*\nabla h+\ric\circ h+h\circ\ric-2\mathring{R}h.
\end{align}
It satisfies some useful properties:
\begin{lem}\label{LL}Let $(M,g)$ be a Riemannian manifold. Then
\begin{align}\label{LL1}\Delta_L(f\cdot g)=&(\Delta f)\cdot g,\\
             \label{LL2} \trace \left(\Delta_L h\right)=&\Delta(\trace h)
\end{align}
for all $f\in C^{\infty}(M),$ $h\in \Gamma(S^2M)$.
Moreover, if $\ric$ is parallel,
\begin{align}\label{LL3}\Delta_L(\delta^*\alpha)&=\delta^*( \Delta_H\alpha),\\
              \label{LL4}\delta(\Delta_Lh)&=\Delta_H(\delta h),\\
              \label{LL5}\Delta_L(\nabla^2 f)&=\nabla^2(\Delta f)
\end{align}
for all $f\in C^{\infty}(M),$ $\alpha\in\Omega^1(M)$, $h\in \Gamma(S^2M)$.\index{$\Delta_H$, Hodge Laplacian}
Here, $\Delta_H=\nabla^*\nabla-\ric$ is the Hodge Laplacian\index{Laplacian!Hodge} on $1$-forms\index{1@$1$-form}.
\end{lem}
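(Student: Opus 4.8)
The plan is to verify the five identities \eqref{LL1}--\eqref{LL5} by direct computation, organized so that the ingredients are established once and reused. First I would record the contraction conventions: the Lichnerowicz Laplacian acts on $\Gamma(S^2M)$ by $\Delta_L h=\nabla^*\nabla h+\ric\circ h+h\circ\ric-2\mathring R h$, where $\ric$ and $h$ are viewed as endomorphisms of $TM$ via the metric. The two identities \eqref{LL1} and \eqref{LL2} that do not require a parallelism hypothesis come out of elementary tensor algebra: for \eqref{LL1}, apply $\Delta_L$ to $f\cdot g$, use $\nabla^*\nabla(f\cdot g)=(\nabla^*\nabla f)\cdot g=(\Delta f)\cdot g$, observe $\ric\circ g+g\circ\ric=2\ric$ as a $(0,2)$-tensor, and check that $\mathring R(g)=\ric$ from the definition $\mathring R h(X,Y)=\sum_i h(R_{e_i,X}Y,e_i)$ together with the first Bianchi identity; the $\ric$-terms then cancel and only $(\Delta f)\cdot g$ survives. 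For \eqref{LL2}, take the metric trace of each of the four terms of $\Delta_L h$: $\trace(\nabla^*\nabla h)=\nabla^*\nabla(\trace h)=\Delta(\trace h)$ since $\nabla^*\nabla$ commutes with metric contractions, $\trace(\ric\circ h+h\circ\ric)=2\langle\ric,h\rangle$, and $\trace(\mathring R h)=\langle\ric,h\rangle$ by the same Bianchi computation, so those contributions cancel as well.

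Next I would treat the three identities \eqref{LL3}--\eqref{LL5}, all of which assume $\nabla\ric=0$; this hypothesis is exactly what lets $\Delta_L$ slide past first-order operators built from $\nabla$. The cleanest route is to use the standard fact (see \cite{Koi79} or \cite[Chapter 1]{Bes08}) that $\Delta_L$ commutes with any natural first-order differential operator that is built from the Levi-Civita connection and the metric, \emph{provided} the curvature terms that appear when commuting $\nabla^*\nabla$ past that operator match up; when $\ric$ is parallel, the relevant Weitzenböck-type curvature corrections on forms and on symmetric tensors are governed by the same curvature endomorphism, and the Ricci and curvature terms of $\Delta_L$ are precisely engineered to absorb the discrepancy. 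Concretely, for \eqref{LL3} I would compute $\Delta_L(\delta^*\alpha)$ and $\delta^*(\Delta_H\alpha)$ by commuting $\nabla^*\nabla$ through $\delta^*$, picking up a curvature term of the form $\sum_i (R_{e_i,\cdot}\alpha)(e_i)$ plus a $\ric\,$-term; using $\nabla\ric=0$ to move $\ric$ freely past $\nabla$, and invoking the first Bianchi identity to rewrite the curvature contraction, one finds that $\ric\circ\delta^*\alpha+\delta^*\alpha\circ\ric-2\mathring R(\delta^*\alpha)$ on the left matches $-\delta^*(\ric(\alpha))$ coming from $\Delta_H=\nabla^*\nabla-\ric$ on the right. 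Identity \eqref{LL4} is the formal $L^2$-adjoint statement of \eqref{LL3}: since $\Delta_L$ is self-adjoint on $\Gamma(S^2M)$, $\Delta_H$ is self-adjoint on $\Omega^1(M)$, and $\delta$ is the adjoint of $\delta^*$, taking adjoints of \eqref{LL3} yields \eqref{LL4} immediately. Identity \eqref{LL5} follows by combining \eqref{LL1} and \eqref{LL3}: writing $\nabla^2 f = \delta^*(df) + \tfrac{1}{n}(\Delta f)g$ up to sign conventions — or more directly, noting $\nabla^2 f$ is the symmetric part and applying $\delta^*$ to $df$ after using $\Delta_H(df)=d(\Delta f)$ (the Hodge Laplacian commutes with $d$) — gives $\Delta_L(\nabla^2 f)=\delta^*(\Delta_H df)=\delta^*(d\Delta f)=\nabla^2(\Delta f)$.

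The main obstacle is bookkeeping the curvature and Ricci contractions in \eqref{LL3}: commuting the rough Laplacian past $\delta^*$ produces several terms that each individually involve the full Riemann tensor, and one must carefully apply the first and second Bianchi identities (the latter for the derivative of curvature, which, combined with $\nabla\ric=0$, forces certain divergence-of-curvature terms to vanish or simplify) to see that the mismatch is \emph{exactly} the Ricci-and-curvature combination $\ric\circ(\cdot)+(\cdot)\circ\ric-2\mathring R$ appearing in $\Delta_L$. Everything else — \eqref{LL1}, \eqref{LL2}, the adjointness argument for \eqref{LL4}, and the reduction of \eqref{LL5} — is then either elementary or formal. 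I would present \eqref{LL1} and \eqref{LL2} in full (they are short), give the commutation computation for \eqref{LL3} with the Bianchi steps spelled out, and dispatch \eqref{LL4} and \eqref{LL5} in a sentence each.
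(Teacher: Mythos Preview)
Your proposal is correct and aligns with the paper's approach where the paper gives any argument at all: both compute \eqref{LL1} directly and derive \eqref{LL5} from \eqref{LL3} via $\Delta_H(df)=d(\Delta f)$ (note that with the paper's normalization $\delta^*(df)=\nabla^2 f$ on the nose, so your ``more directly'' route is the right one and your tentative decomposition $\nabla^2 f=\delta^*(df)+\tfrac{1}{n}(\Delta f)g$ should be dropped). The paper itself is much terser---it simply refers \eqref{LL2}, \eqref{LL3}, \eqref{LL4} to Lichnerowicz \cite[pp.~28--29]{Lic61}---so your direct computation of \eqref{LL2}, your commutation argument for \eqref{LL3}, and especially your clean derivation of \eqref{LL4} as the formal $L^2$-adjoint of \eqref{LL3} are genuine additions rather than deviations.
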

 \begin{proof}Formula \eqref{LL1} follows from an easy calculation. Formula \eqref{LL5} follows from \eqref{LL3} and the well-known formula $\Delta_H(\nabla f)=\nabla(\Delta f)$.
For a proof of the other formulas, see e.g.\ \cite[pp.~28-29]{Lic61}.
\end{proof}
Observe that on Einstein manifolds, we have the relation $\Delta_L=\Delta_E+2\mu\cdot\identity$ where $\mu$ is the Einstein constant.
 \begin{lem}\label{spectrumdecomposition}Let $(M,g)$ be an Einstein manifold with constant $\mu$. Then the spectrum of $\Delta_E$ on $\Gamma(S^2M)$ can be decomposed as
 \begin{align*}\spectrum(\Delta_E)=\spectrum(\Delta_0-2\mu\cdot\identity)\cup\spectrum_+((\Delta_1-\mu\cdot\identity)|_{W})\cup\spectrum(\Delta_{E}|_{TT}),
 \end{align*}
 where $W=\left\{\alpha\in\Omega^1(M)\mid\delta\alpha=0\right\}$. Here, $\spectrum_+$ denotes the positive part of the spectrum.
 \end{lem}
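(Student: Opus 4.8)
The strategy is to analyse $\Delta_E$ separately on the three summands of the splitting \eqref{decomp}, using $\Delta_L=\Delta_E+2\mu\cdot\identity$ and the intertwining identities of Lemma \ref{LL}. We may assume that $(M,g)$ is not the round sphere, so that \eqref{decomp} is available; since $\Delta_E$ preserves each of $C^{\infty}(M)\cdot g$, $\delta^*(\Omega^1(M))$ and $TT_g$, and is elliptic and self-adjoint on the compact manifold $M$ (so that its spectrum consists of eigenvalues), we obtain
\begin{align*}
\spectrum(\Delta_E)=\spectrum(\Delta_E|_{C^{\infty}(M)\cdot g})\cup\spectrum(\Delta_E|_{\delta^*(\Omega^1(M))})\cup\spectrum(\Delta_E|_{TT}),
\end{align*}
and the last term is already the third set in the claim, so it remains to identify the first two contributions.

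On the conformal summand, \eqref{LL1} together with $\Delta_L=\Delta_E+2\mu\cdot\identity$ gives $\Delta_E(f\cdot g)=((\Delta_0-2\mu)f)\cdot g$, so $f\mapsto f\cdot g$ conjugates $\Delta_0-2\mu\cdot\identity$ to $\Delta_E|_{C^{\infty}(M)\cdot g}$ and this summand contributes exactly $\spectrum(\Delta_0-2\mu\cdot\identity)$. On $\delta^*(\Omega^1(M))$, formula \eqref{LL3} (valid since $\ric$ is parallel), the standard relation between $\Delta_H$ and the connection Laplacian $\Delta_1$ on $1$-forms of an Einstein manifold, and $\Delta_L=\Delta_E+2\mu\cdot\identity$ combine to the intertwining relation
\begin{align*}
\Delta_E(\delta^*\alpha)=\delta^*\big((\Delta_1-\mu\cdot\identity)\,\alpha\big),\qquad \alpha\in\Omega^1(M).
\end{align*}
The kernel of $\delta^*$ on $1$-forms is exactly the space $\mathcal K$ of $1$-forms dual to Killing vector fields, and $\mathcal K\subseteq W$ since Killing fields are divergence-free; hence $\delta^*$ induces an isomorphism $\Omega^1(M)/\mathcal K\to\delta^*(\Omega^1(M))$ conjugating the operator induced by $\Delta_1-\mu\cdot\identity$ to $\Delta_E|_{\delta^*(\Omega^1(M))}$. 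I would then use the Hodge decomposition $\Omega^1(M)=dC^{\infty}(M)\oplus W$, which is invariant under $\Delta_H$ and hence, on an Einstein manifold, under $\Delta_1$, and which contains $\mathcal K$ in its second factor. On $dC^{\infty}(M)$ the commutation $\Delta_H d=d\Delta_0$ identifies $\Delta_1-\mu\cdot\identity$ with a restriction of $\Delta_0-2\mu\cdot\identity$ to $C^{\infty}(M)$ via $d$, so that part of the spectrum lies in $\spectrum(\Delta_0-2\mu\cdot\identity)$ and contributes nothing new. Everything thus reduces to showing that the operator induced on $W/\mathcal K$ has spectrum $\spectrum_+((\Delta_1-\mu\cdot\identity)|_W)$.

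The only genuine estimate needed is that $(\Delta_1-\mu\cdot\identity)|_W$ is nonnegative with kernel exactly $\mathcal K$; I expect this to be the main point. For $\alpha\in W$ I would combine two identities: splitting $\nabla\alpha$ into its symmetric and antisymmetric parts, $\nabla\alpha=\delta^*\alpha+\tfrac12 d\alpha$, gives $\|\nabla\alpha\|^2_{L^2}=\|\delta^*\alpha\|^2_{L^2}+\tfrac12\|d\alpha\|^2_{L^2}$; the Bochner formula $\|d\alpha\|^2_{L^2}+\|\delta\alpha\|^2_{L^2}=\|\nabla\alpha\|^2_{L^2}+\mu\|\alpha\|^2_{L^2}$ reduces, for $\alpha\in W$, to $\|d\alpha\|^2_{L^2}=\|\nabla\alpha\|^2_{L^2}+\mu\|\alpha\|^2_{L^2}$. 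Eliminating $\|d\alpha\|^2_{L^2}$ yields
\begin{align*}
\langle(\Delta_1-\mu\cdot\identity)\alpha,\alpha\rangle_{L^2}=\|\nabla\alpha\|^2_{L^2}-\mu\|\alpha\|^2_{L^2}=2\,\|\delta^*\alpha\|^2_{L^2}\ \geq\ 0,
\end{align*}
with equality precisely when $\delta^*\alpha=0$, i.e.\ when $\alpha\in\mathcal K$. Since $\Delta_1-\mu\cdot\identity$ is self-adjoint on the invariant subspace $W$, its zero eigenspace is exactly $\mathcal K$ and all remaining eigenvalues on $W$ are strictly positive, so the operator induced on $W/\mathcal K\cong W\cap\mathcal K^{\perp}$ has spectrum $\spectrum_+((\Delta_1-\mu\cdot\identity)|_W)$. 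Assembling the three contributions gives the asserted decomposition; the excluded case of the round sphere, where \eqref{decomp} takes a slightly different form, is easily checked to obey the same identity.
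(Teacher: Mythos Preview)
Your proof is correct and follows essentially the same route as the paper: analyse $\Delta_E$ on the three summands of \eqref{decomp} via the intertwining identities of Lemma~\ref{LL}, split $\delta^*(\Omega^1(M))$ further via the Hodge decomposition $\Omega^1(M)=dC^\infty(M)\oplus W$ (the paper does this by taking the explicit eigenbases $\{\nabla^2 v_i\}$ and $\{\delta^*\alpha_i\}$), and observe that the exact part contributes only eigenvalues already present in $\spectrum(\Delta_0-2\mu)$. The one place your argument differs slightly is the derivation of the key estimate $\langle(\Delta_1-\mu)\alpha,\alpha\rangle_{L^2}=2\|\delta^*\alpha\|^2_{L^2}$ for $\alpha\in W$: you obtain it by combining the symmetric/antisymmetric decomposition of $\nabla\alpha$ with the Bochner formula, whereas the paper isolates this as Lemma~\ref{divfreespectrum} and proves the equivalent identity $\|\nabla\alpha\|^2_{L^2}=2\|\delta^*\alpha\|^2_{L^2}+\mu\|\alpha\|^2_{L^2}$ by a direct integration by parts; the two computations are equivalent.
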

 \begin{proof}If $(M,g)$ is not the standard sphere\index{sphere}, we consider the decomposition 
  \begin{align*}\Gamma(S^2 M)=C^{\infty}(M)\cdot g\oplus \delta_g^{*}(\Omega^1(M))\oplus TT_g.
   \end{align*}
Let $\left\{v_i\right\}$, ${i\in\N_0}$ be an eigenbasis of $\Delta_0$ to the eigenvalues $\lambda_i^{(0)}$, where $v_0$ is the constant eigenfunction. Let $\left\{\alpha_i\right\}$, ${i\in\N}$, be an eigenbasis of $\Delta_1=\Delta_H-\mu$ acting on $W$ with eigenvalues $\lambda_i^{(1)}$. Let $\left\{h_i\right\}_{i\in \N}$ 
be an eigenbasis of $\Delta_{E}|_{TT}$ with eigenvalues $\lambda^{(2)}_i$. Then $\left\{\nabla v_i\right\}$, ${i\in\N}$, $\left\{\alpha_i\right\}$, ${i\in\N}$ form an eigenbasis of $\Delta_1$ on all $1$-forms\index{1@$1$-form} and
$\left\{v_i\cdot g\right\}$, ${i\in\N_0}$, $\left\{\nabla^2v_i\right\}$, ${i\in\N}$, $\left\{\delta^*\alpha_i\right\}$, $i\in\N$ and $\left\{h_i\right\}$, ${i\in\N}$ form a basis of $\Gamma(S^2M)$.
On the round sphere, we have 
 \begin{align*}\Gamma(S^2M)=[C^{\infty}(M)\cdot g+ \delta_g^{*}(\Omega^1(M))]\oplus \trace_g^{-1}(0)\cap\delta_g^{-1}(0).
\end{align*}
and \begin{align*}C^{\infty}(M)\cdot g\cap \delta_{g}^{*}(\Omega^1(M))=\left\{f\cdot g\in C^{\infty}(M)\cdot g\mid\Delta f=n\cdot f\right\},
\end{align*}
where $n$ is the first nonzero eigenvalue of the Laplacian (see \cite[Lemma 4.57]{Bes08} and \cite[Theorem A]{Ob62}).
If $(M,g)=(S^n,g_{st})$, we therefore have a basis, if we remove from $\left\{\nabla^2v_i\right\}$ the $v_i$ which are the eigenfunctions to the first nonzero eigenvalue of the Laplacian.
By the relation $\Delta_E=\Delta_L-2\mu\cdot\identity$ and Lemma \ref{LL}, we have
\begin{align*}\Delta_E(v_i\cdot g)&=(\lambda_i^{(0)}-2\mu)v_i\cdot g,\\
              \Delta_E(\nabla^2v_i)&=(\lambda_i^{(0)}-2\mu)\nabla^2v_i,\\
              \Delta_E(\delta^*\alpha_i)&=(\lambda_i^{(1)}-\mu)\delta^*\alpha_i,
\end{align*}
which shows that we have obtained a basis of eigentensors of $\Delta_E$.
By Lemma \ref{divfreespectrum} below, $\lambda_i^{(1)}-\mu\geq0$ and equality holds if and only if $\delta^*\omega_i=0$. This finishes the proof of the lemma.
 \end{proof}
 \begin{lem}\label{divfreespectrum}Let $(M,g)$ be an Einstein manifold with constant $\mu$ and $W$ as in Lemma \ref{spectrumdecomposition} above. Then
 \begin{align*} \left\|\nabla\alpha\right\|_{L^2}^2=2\left\|\delta^*\alpha\right\|^2+\mu\left\|\alpha\right\|^2_{L^2}
 \end{align*}
 for any $\alpha\in W$. In particular, $\spectrum((\Delta_1-\mu\cdot\identity)|_{W})$ is nonnegative.
 \end{lem}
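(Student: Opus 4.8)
The plan is to establish this as a Bochner-type identity, by combining a purely pointwise algebraic decomposition of $\nabla\alpha$ with a single integration by parts in which the Einstein condition enters through a Ricci contraction; the spectral statement is then an immediate corollary. Throughout I work in a local orthonormal frame $\{e_i\}$ and abbreviate $\nabla_i\alpha_j=(\nabla_{e_i}\alpha)(e_j)$.

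First I would record the pointwise identity
\begin{align*}
|\nabla\alpha|^2-2|\delta^*\alpha|^2=-\sum_{i,j}(\nabla_i\alpha_j)(\nabla_j\alpha_i),
\end{align*}
which holds because $\delta^*\alpha$ is exactly the symmetrization of the $(0,2)$-tensor $\nabla\alpha$ and because the symmetric and antisymmetric parts of a $(0,2)$-tensor are pointwise orthogonal. Integrating over $M$, it then remains to show that $\int_M\sum_{i,j}(\nabla_i\alpha_j)(\nabla_j\alpha_i)\dv=-\mu\|\alpha\|_{L^2}^2$. For this I would integrate by parts using that the integral over the closed manifold $M$ of the divergence of the vector field $\nabla_{\alpha^\sharp}\alpha$ vanishes, which gives $\int_M\sum_{i,j}(\nabla_i\alpha_j)(\nabla_j\alpha_i)\dv=-\int_M\sum_{i,j}\alpha_j\,(\nabla^2_{e_i,e_j}\alpha)(e_i)\dv$. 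I then commute the two covariant derivatives: the reordered term equals $-\nabla_j(\delta\alpha)$ and vanishes because $\alpha\in W$, while the curvature term is a Ricci contraction, and using $\sum_i\langle R_{e_i,e_j}e_i,e_k\rangle=-\ric(e_j,e_k)$ together with $\ric=\mu g$ one obtains $\sum_i(\nabla^2_{e_i,e_j}\alpha)(e_i)=\mu\alpha_j$. Substituting this back gives $\int_M\sum_{i,j}(\nabla_i\alpha_j)(\nabla_j\alpha_i)\dv=-\mu\|\alpha\|_{L^2}^2$, hence the asserted identity.

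The last assertion then follows at once: for $\alpha\in W$ one has $\langle(\Delta_1-\mu\cdot\identity)\alpha,\alpha\rangle_{L^2}=\|\nabla\alpha\|_{L^2}^2-\mu\|\alpha\|_{L^2}^2=2\|\delta^*\alpha\|_{L^2}^2\geq0$, so $(\Delta_1-\mu\cdot\identity)|_W$ is a nonnegative operator, with kernel exactly the space of $1$-forms dual to Killing fields (since $\delta^*\alpha=0$ is precisely the Killing equation, which in particular forces $\delta\alpha=0$). I expect the only genuinely error-prone point to be the curvature sign bookkeeping in the step $\sum_i(\nabla^2_{e_i,e_j}\alpha)(e_i)=\mu\alpha_j$; the rest is routine. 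Conceptually this identity is nothing but the classical Bochner formula applied to the vector field $\alpha^\sharp$---whose divergence is $-\delta\alpha=0$ and which satisfies $\mathcal L_{\alpha^\sharp}g=2\delta^*\alpha$---specialized to the divergence-free case, so an alternative would be to quote that formula directly; I would nonetheless include the short computation above to keep the argument self-contained.
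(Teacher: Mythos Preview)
Your proof is correct and follows essentially the same route as the paper: the same pointwise algebraic identity relating $|\nabla\alpha|^2$, $|\delta^*\alpha|^2$ and the cross term $\sum_{i,j}(\nabla_i\alpha_j)(\nabla_j\alpha_i)$, the same integration by parts, and the same commutation of covariant derivatives using $\delta\alpha=0$ to reduce to a Ricci term. Your added remark identifying the kernel with Killing forms is also consistent with how the paper uses the lemma in the proof of Lemma~\ref{spectrumdecomposition}.
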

 \begin{proof}Let $\left\{e_1,\ldots,e_n\right\}$ be a local orthonormal frame. Then
 \begin{align*}
 \left\|\nabla\alpha\right\|_{L^2}^2&=\int_M \sum_{i,j}(\nabla_{e_i}\alpha(e_j))^2\dv\\
                                              &=\frac{1}{2}\int_M \sum_{i,j}[(\nabla_{e_i}\alpha(e_j)+\nabla_{e_j}\alpha(e_i))^2-2(\nabla_{e_i}\alpha(e_j)\nabla_{e_j}\alpha(e_i))]\dv\\
                                             &=2\left\|\delta^*\alpha\right\|^2+\int_M\sum_{i,j} \alpha(e_j)\nabla^{2}_{e_i,e_j}\alpha(e_i)\dv\\
                                            &=2\left\|\delta^*\alpha\right\|^2+\int_M\sum_{i,j}\alpha(e_j) R_{e_i,e_j}\alpha(e_i)\dv\\
                                              &=2\left\|\delta^*\alpha\right\|^2+\int_M \sum_{j}\alpha(e_j) (\alpha\circ\ric)(e_j)\dv\\
                                              &=2\left\|\delta^*\alpha\right\|^2+\mu\left\|\alpha\right\|^2_{L^2}.
\end{align*}
Here we used $\delta\alpha$ to get from the third line to the fourth.
If $\mu\leq0$, then $\Delta_1-\mu\cdot\identity=\nabla^*\nabla-\mu\cdot\identity$
is obviously nonnegative. By the formula we just have shown, this also holds
if $\mu>0$.
 \end{proof}
 \begin{prop}If $(M,g_1)$ and $(N,g_2)$ are two stable Einstein metrics with $\mu\leq0$, the product manifold $(M\times N,g+ h)$ is also stable.
 \end{prop}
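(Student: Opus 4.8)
The plan is to prove the stronger statement that $\Delta_E^{M\times N}$ is nonnegative on the \emph{whole} space $\Gamma(S^2(M\times N))$; since $TT_{g_1+g_2}$ is a subspace, this gives stability of the product in the sense of Definition \ref{stability}. The main tool is Proposition \ref{productspectrum}, which writes $\spectrum(\Delta_E^{M\times N})$ as the union of the three sets $\spectrum(\Delta_E^M)+\spectrum(\Delta_0^N)$, $\spectrum(\Delta_E^N)+\spectrum(\Delta_0^M)$ and $\spectrum(\Delta_1^M)+\spectrum(\Delta_1^N)$. It therefore suffices to check that every element of each of these sets is a sum of two nonnegative real numbers.

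The third set is immediate: $\Delta_1^M$ and $\Delta_1^N$ are connection Laplacians on $1$-forms, i.e.\ of the form $\nabla^*\nabla$, hence nonnegative, so the same holds for any sum of an element of $\spectrum(\Delta_1^M)$ and an element of $\spectrum(\Delta_1^N)$. For the first two sets I would first observe that $\Delta_0^M$ and $\Delta_0^N$ are connection Laplacians on functions and hence have nonnegative spectrum, so it remains to show $\spectrum(\Delta_E^M)\geq0$ and $\spectrum(\Delta_E^N)\geq0$, i.e.\ that the Einstein operator of each factor is nonnegative on \emph{all} of $\Gamma(S^2M)$, respectively $\Gamma(S^2N)$.

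To establish this for $(M,g_1)$ I would apply Lemma \ref{spectrumdecomposition}, which gives $\spectrum(\Delta_E^M)=\spectrum(\Delta_0^M-2\mu\cdot\identity)\cup\spectrum_+((\Delta_1^M-\mu\cdot\identity)|_W)\cup\spectrum(\Delta_E^M|_{TT})$. Here the first piece is nonnegative because $\mu\leq0$ makes $\Delta_0^M-2\mu\cdot\identity=\nabla^*\nabla-2\mu\cdot\identity$ nonnegative; the middle piece is nonnegative by Lemma \ref{divfreespectrum} (and in any case it is the positive part of a spectrum); and the last piece is nonnegative precisely by the stability hypothesis on $(M,g_1)$. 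Hence $\spectrum(\Delta_E^M)\geq0$, and symmetrically $\spectrum(\Delta_E^N)\geq0$ using the stability of $(N,g_2)$. Combining the three cases, every eigenvalue of $\Delta_E^{M\times N}$ is nonnegative, so $\Delta_E^{M\times N}\geq0$ and in particular $(M\times N,g_1+g_2)$ is stable.

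I do not expect a genuine obstacle: the argument is an assembly of the preceding lemmas. The only points requiring care are the sign bookkeeping --- the hypothesis $\mu\leq0$ is used exactly to render $\Delta_0-2\mu\cdot\identity$ nonnegative in the spectral decomposition of each factor --- and keeping track of the fact that it is the stability assumption on each factor that controls the $TT$-part, while the conformal and gauge directions are automatically nonnegative. One also tacitly uses that both factors carry the same Einstein constant $\mu$, which is the very condition under which the product is Einstein.
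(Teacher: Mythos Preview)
Your proposal is correct and follows essentially the same route as the paper: use Lemma \ref{spectrumdecomposition} together with $\mu\leq0$ to upgrade stability of each factor to nonnegativity of $\Delta_E$ on all of $\Gamma(S^2M)$ and $\Gamma(S^2N)$, then invoke Proposition \ref{productspectrum} to conclude that every eigenvalue of $\Delta_E^{M\times N}$ is a sum of nonnegative numbers. Your write-up is simply a more explicit unpacking of the same two-line argument.
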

 \begin{proof}By Lemma \ref{spectrumdecomposition} and since $\mu\leq0$, the operators $\Delta_E^M$, $\Delta_E^N$ are nonnegative on all of $\Gamma(S^2M)$ (resp.\ $\Gamma(S^2N)$) if and only if their restriction to $TT$-tensors is, respectively.
 By Proposition \ref{productspectrum}, $\Delta_E^{M\times N}$ is nonnegative since the sum of the spectra\index{spectrum} does not contain negative elements.
 \end{proof}
 If $(M,g)$ and $(N,g_2)$ are stable Einstein manifolds with constant $\mu<0$, it is also quite immediate that 
 \begin{align*}\kernel(\Delta_E^{M\times N}|_{TT})\cong \kernel(\Delta_{E}^{M}|_{TT})\oplus \kernel(\Delta_{E}^{N}|_{TT})
 \end{align*}
 (see \cite[Lemma 3.2]{AMo11}). We show that if $\mu=0$, the situation is slightly more subtle.
 \begin{prop}Let $(M^{n_1},g_1)$ and $(N^{n_2},g_2)$ be stable Ricci-flat\index{Ricci-flat} manifolds. Then
 \begin{align*}\kernel(\Delta_{E}^{M\times N}|_{TT})\cong&\R(n_2\cdot g_1-n_1\cdot g_2)\oplus (\Parallel(\Omega^1(M))\odot\Parallel(\Omega^1(N)))\\
                                                     &\oplus\kernel(\Delta_{E}^{M}|_{TT})\oplus \kernel(\Delta_{E}^{N}|_{TT}).
 \end{align*}
 Here, $\Parallel(\Omega^1(M)),\Parallel(\Omega^1(N))$\index{$\Parallel(M)$, space of parallel $1$-forms on $M$} denote the spaces of parallel $1$-forms\index{1@$1$-form} on $M,N$ respectively.
 If all infinitesimal Einstein deformations of $M$ and $N$ are integrable, then all infinitesimal Einstein deformations of $M\times N$ are integrable.
 \end{prop}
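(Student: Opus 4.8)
The plan is to compute $\kernel(\Delta_E^{M\times N})$ on all of $\Gamma(S^2(M\times N))$ from the spectral picture and then cut out the $TT$-part. Since $M$ and $N$ are stable and Ricci-flat, Lemma \ref{spectrumdecomposition} with $\mu=0$ shows that $\spectrum(\Delta_E^M)$ and $\spectrum(\Delta_E^N)$ are nonnegative and, from the eigenbasis constructed in its proof, that $\kernel(\Delta_E^M)=\R g_1\oplus\kernel(\Delta_E^M|_{TT})$ and $\kernel(\Delta_E^N)=\R g_2\oplus\kernel(\Delta_E^N|_{TT})$; here the $\delta^*$-part of the kernel drops out because $\Delta_1|_W$ is nonnegative with kernel the parallel $1$-forms, on which $\delta^*$ vanishes, and $\nabla^2$ of a constant is zero. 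By Proposition \ref{productspectrum} an eigentensor $v_ik_j$, $w_ih_j$ or $\alpha_i\odot\beta_j$ lies in $\kernel(\Delta_E^{M\times N})$ precisely when both contributing eigenvalues vanish, which forces $v_i,w_i$ constant, $k_j\in\kernel(\Delta_E^N)$, $h_j\in\kernel(\Delta_E^M)$, and $\alpha_i,\beta_j$ parallel. As these three families are mutually orthogonal,
\begin{align*}
\kernel(\Delta_E^{M\times N})=\R g_1\oplus\R g_2\oplus\kernel(\Delta_E^M|_{TT})\oplus\kernel(\Delta_E^N|_{TT})\oplus\big(\Parallel(\Omega^1(M))\odot\Parallel(\Omega^1(N))\big).
\end{align*}

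Then I would intersect with $TT_{g_1+g_2}$. Lifts of elements of $\kernel(\Delta_E^M|_{TT})$ and $\kernel(\Delta_E^N|_{TT})$ stay trace- and divergence-free, since these operations restrict to their analogues on a factor; a lift of $\alpha\odot\beta$ with $\alpha,\beta$ parallel is parallel on $M\times N$, hence divergence-free, and traceless as it has no component in $S^2(TM)$ or $S^2(TN)$; and $ag_1+bg_2$ is parallel, hence divergence-free, with trace $an_1+bn_2$, so $(\R g_1\oplus\R g_2)\cap TT_{g_1+g_2}=\R(n_2g_1-n_1g_2)$. Intersecting the displayed decomposition term by term gives the asserted description of $\kernel(\Delta_E^{M\times N}|_{TT})$.

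For integrability, the key observation is that adding a parallel symmetric $2$-tensor to a Ricci-flat metric keeps it Ricci-flat: if $\nabla$ is the torsion-free Levi-Civita connection of a Ricci-flat metric $g$ and $p$ is $\nabla$-parallel, then $g+tp$ is positive definite for small $t$ and is still parallelized by $\nabla$, so $\nabla$ is its Levi-Civita connection; hence $g+tp$ has the same curvature operator and the same (vanishing) $(0,2)$-Ricci tensor. Thus on $M\times N$ every $p$ in $\R(n_2g_1-n_1g_2)\oplus(\Parallel(\Omega^1(M))\odot\Parallel(\Omega^1(N)))$ gives a Ricci-flat line $t\mapsto g_1+g_2+tp$. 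Since all infinitesimal Einstein deformations of $M$ and $N$ are integrable, the premoduli spaces $\mathcal{E}(M)$, $\mathcal{E}(N)$ of Ricci-flat metrics near $g_1$, $g_2$ are smooth manifolds with tangent spaces $\kernel(\Delta_E^M|_{TT})$, $\kernel(\Delta_E^N|_{TT})$, and the parallel trick applies to every product $g_1'+g_2'$ with $p$ ranging over $\R(n_2g_1'-n_1g_2')\oplus(\Parallel(\Omega^1(M,g_1'))\odot\Parallel(\Omega^1(N,g_2')))$ --- a vector bundle $\mathcal{P}\to\mathcal{E}(M)\times\mathcal{E}(N)$ of \emph{constant} rank $1+b_1(M)b_1(N)$, since on Ricci-flat manifolds parallel $1$-forms are the harmonic ones and $b_1$ is topological.

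I would finish by observing that $\mathcal{P}\to\mathcal{M}$, $(g_1',g_2',p)\mapsto g_1'+g_2'+p$, takes values in Ricci-flat metrics near $(g_1,g_2,0)$, and that its differential at $(g_1,g_2,0)$ is $(\dot g_1',\dot g_2',\dot p)\mapsto\dot g_1'+\dot g_2'+\dot p$ with image in $\kernel(\Delta_E^M|_{TT})\oplus\kernel(\Delta_E^N|_{TT})\oplus\big(\R(n_2g_1-n_1g_2)\oplus(\Parallel(\Omega^1(M))\odot\Parallel(\Omega^1(N)))\big)$, which by the first part equals $\kernel(\Delta_E^{M\times N}|_{TT})$ and is a direct sum; hence this differential is injective and $\mathcal{P}$ immerses near $(g_1,g_2,0)$ onto a family of Ricci-flat metrics with tangent space all of $\kernel(\Delta_E^{M\times N}|_{TT})$ at $g_1+g_2$, so every infinitesimal Einstein deformation of $M\times N$ is tangent to a curve in this family. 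The steps I expect to require the most care are showing that the $\delta^*$-part of the Ricci-flat kernel genuinely drops out and that one has correctly identified, and can simultaneously realize, the ``new'' parallel deformations of the product alongside the deformations internal to the factors; the spectral count itself is bookkeeping on top of Proposition \ref{productspectrum} and Lemma \ref{spectrumdecomposition}.
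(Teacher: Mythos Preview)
Your computation of $\kernel(\Delta_E^{M\times N})$ and its restriction to $TT$ is correct and follows the same path as the paper: Proposition~\ref{productspectrum} together with the nonnegativity coming from Lemma~\ref{spectrumdecomposition} force the kernel to be built from constants, parallel $1$-forms, and the $TT$-kernels on the factors, after which the $TT$-intersection is exactly the bookkeeping you describe.

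For integrability your argument differs from the paper's in two ways. First, to integrate the parallel summands you use the clean observation that a $\nabla$-parallel symmetric perturbation leaves the Levi-Civita connection, hence the $(0,2)$-Ricci tensor, unchanged. The paper instead records the holonomy splitting $T(M\times N)=(E\oplus E')\oplus F$ induced by the parallel $1$-forms and argues that $g_t=g_1+g_2+th$ is unchanged on $E\oplus E'$ and stays flat on $F$; unwound, this is the same connection argument, but your version is more transparent and handles $\R(n_2g_1-n_1g_2)$ in one stroke (the paper treats that piece separately via rescalings). Second, and more substantively, the paper integrates each of the four summands \emph{individually} and stops there, so the integrability of a general sum $h_1+h_2+h_3+h_4$ is left implicit. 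Your bundle $\mathcal{P}\to\mathcal{E}(M)\times\mathcal{E}(N)$ produces a single smooth family of Ricci-flat metrics whose tangent space at $g_1+g_2$ is the entire kernel, so you prove the statement as literally phrased. The cost is that you invoke the standard (but not stated in the paper) fact that integrability of all IED's makes the premoduli space a smooth manifold of the expected dimension; your Bochner identification of parallel with harmonic $1$-forms, giving constant fibre rank $1+b_1(M)b_1(N)$, is precisely what makes $\mathcal{P}$ a genuine vector bundle and the immersion argument go through.
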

 \begin{proof}By the proof of Proposition \ref{productspectrum}, the kernel\index{kernel} of $\Delta_E^{M\times N}$ is spanned by tensors of the form $v_ik_j$, $w_ih_j$, $\alpha_i\odot\beta_j$
 where $v_i,\alpha_i,h_i$ and $w_i,\beta_i,k_i$ are eigentensors of $\Delta_0,\Delta_1,\Delta_E$ on $M$ and $N$, respectively. By Lemma \ref{spectrumdecomposition}, these operators are nonnegative, so the eigentensors have to lie in the kernel\index{kernel}
 of the corresponding operators. 
 This shows 
 \begin{align*}\kernel(\Delta_{E}^{M\times N})\cong& \R\cdot g_1\oplus\R\cdot g_2\oplus (\Parallel(\Omega^1(M))\odot\Parallel(\Omega^1(N)))\\
                                                   &\oplus\kernel(\Delta_{E}^{M}|_{TT})\oplus \kernel(\Delta_{E}^{N}|_{TT}).
 \end{align*}
 The first assertion follows from restricting $\Delta_{E}^{M\times N}$ to $TT$-tensors.
 Any deformation $h\in\R(n_2\cdot g_1-n_1\cdot g_2)$ is integrable since it can be integrated to a curve of metrics of the form $(g_1)_t+(g_2)_t$ where $(g_1)_t$ and $(g_2)_t$ are just rescalings of $g_1$ and $g_2$.
 This of course does not affect the Ricci-flatness\index{Ricci-flat} of $M\times N$.

 Now, consider the situation where $h\in (\Parallel(\Omega^1(M))\odot\Parallel(\Omega^1(N)))$.
 Let $\alpha_1,\ldots,\alpha_{m_1}$ be a basis of $\Parallel(\Omega^1(M))$ and $\beta_1,\ldots,\beta_{m_2}$ be a basis of $\Parallel(\Omega^1(N))$. Suppose for simplicity that all these 
 forms have constant lengh $1$.
 Then $$h=\sum_{i=1}^{m_1}\sum_{j=1}^{m_2}c_{ij}\cdot\alpha_i\odot\beta_j,$$
 where $c_{ij}\in\R$. We show that $h$ is integrable.
 By the holonomy principle\index{holonomy!principle}, we have parallel decompositions\index{parallel!decomposition}
 \begin{align*}TM=E\oplus\bigoplus_{i=1}^{m_1}(\R\cdot \alpha_i^{\sharp}),\qquad TN=E'\oplus\bigoplus_{j=1}^{m_2}(\R\cdot\beta_j^{\sharp}),
\end{align*}
 and the metrics split as $g_1=\tilde{g}_1+\sum_{i=1}^{m_1}\alpha_i\otimes\alpha_i$, $g_2=\tilde{g}_2+\sum_{j=1}^{m_2}\beta_j\otimes\beta_j$. The metrics $\tilde{g}_1$ and $\tilde{g}_2$ are also Ricci-flat. 
 The tangent bundle of the product\index{product} manifold obviously splits as \index{$\otimes$, tensor product}
 \begin{align*}T(M\times N)= E\oplus E'\oplus\bigoplus_{i=1}^{m_1}(\R\cdot \alpha_i^{\sharp})\oplus\bigoplus_{j=1}^{m_2}(\R\cdot\beta_j^{\sharp}).
 \end{align*}
Observe that $g_1+g_2$ is flat when restricted to
\begin{align*}F=\bigoplus_{i=1}^{m_1}(\R\cdot \alpha_i^{\sharp})\oplus\bigoplus_{j=1}^{m_2}(\R\cdot\beta_j^{\sharp}).
\end{align*}
 Consider the curve of metrics $t\mapsto g_t=g_1+ g_2+th$ on $M\times N$. 
 The metric restricted $E\oplus E'$ does not change and stays flat if we restrict to $F$.
 Thus, $g_t$ is a curve of Ricci-flat metrics, so $h$ is integrable.

If $h\in \kernel(\Delta_{E}^{M}|_{TT})$, then there exists a curve of Einstein metrics $(g_1)_t$ on $M$ tangent
to $h$ by assumption. Consequently, the curve $(g_1)_t\oplus g_2$ is a curve of Einstein metrics on $M\times N$ tangent to $h$, so
$h$ is integrable (considered as an infinitesimal Einstein deformation on $M\times N$).
If $h\in \kernel(\Delta_{E}^{N}|_{TT})$, an analogous argument shows the integrability of $h$.
 \end{proof}
\noindent
Now, let us turn to the case where the Einstein constant is positive. Here, we often use a consequence of a result by Obata \cite{Ob62}: On any Einstein manifold of constant $\mu$, the smallest nonzero eigenvalue of the Laplacian
satisfies $\lambda\geq\frac{n}{n-1}\mu$ and equality holds exactly in the case of the round sphere. We refer to this fact as Obata's eigenvalue estimate.
\begin{lem}\label{kernelindexdecomposition}Let $(M,g)$ be a positive Einstein manifold with constant $\mu$. Then\index{$\ind$, index of a quadratic form}\index{$\mult_{\Delta}(\lambda)$, multiplicity of $\lambda$ as an eigenvalue of $\Delta$}
 \begin{align*}\dimn(\kernel\Delta_E)&=2\cdot\mult_{\Delta_0}(2\mu)+\dimn(\kernel \Delta_{E}|_{TT}),\\
               \ind(\Delta_E)&=1+\mult_{\Delta_0}\left(\frac{n}{n-1}\mu\right)+\sum_{\lambda\in (\frac{n}{n-1}\mu,2\mu)}2\cdot\mult_{\Delta_0}(\lambda)+\ind(\Delta_{E}|_{TT}),
\end{align*}
where $\mult_{\Delta_0}(\lambda)$ is the multiplicity\index{multiplicity} of $\lambda$ as an eigenvalue of $\Delta_0$ and $\ind(\Delta_E)$ is the index of the quadratic form $h\mapsto(\Delta_Eh,h)_{L^2}$.\index{index of a quadratic form}
\end{lem}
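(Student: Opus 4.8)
The plan is to read both quantities directly off the explicit eigenbasis of $\Delta_E$ on $\Gamma(S^2M)$ that is constructed in the proof of Lemma~\ref{spectrumdecomposition}. Write $\{v_i\}_{i\in\N_0}$ for an $L^2$-orthonormal eigenbasis of $\Delta_0$ with eigenvalues $\lambda_i^{(0)}$, where $v_0$ is the constant function, $\{\alpha_i\}_{i\in\N}$ for an eigenbasis of $\Delta_1|_W$ with eigenvalues $\lambda_i^{(1)}$, and $\{h_i\}_{i\in\N}$ for an eigenbasis of $\Delta_E|_{TT}$. By that proof the families $\{v_i\cdot g\}$, $\{\nabla^2v_i\}$, $\{\delta^*\alpha_i\}$ and $\{h_i\}$ together form a basis of $\Gamma(S^2M)$ (on the round sphere one deletes from $\{\nabla^2v_i\}$ those $v_i$ belonging to the first nonzero eigenvalue of $\Delta_0$), the $\Delta_E$-eigenvalue on $v_i\cdot g$ and on $\nabla^2v_i$ equals $\lambda_i^{(0)}-2\mu$, and the $\Delta_E$-eigenvalue on $\delta^*\alpha_i$ equals $\lambda_i^{(1)}-\mu$. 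So I would simply add up the contributions of the four families to $\dimn\kernel\Delta_E$ and to $\ind(\Delta_E)$.

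For the kernel: by Lemma~\ref{divfreespectrum} one has $\lambda_i^{(1)}-\mu>0$ whenever $\delta^*\alpha_i\neq0$, so $\{\delta^*\alpha_i\}$ contributes nothing. A tensor $v_i\cdot g$ or $\nabla^2v_i$ lies in $\kernel\Delta_E$ exactly when $\lambda_i^{(0)}=2\mu$; then $v_i$ is nonconstant and $\trace\nabla^2v_i=-\Delta v_i=-2\mu v_i\neq0$, so $\nabla^2v_i\neq0$, and on the round sphere the deleted functions belong to the first eigenvalue $\frac{n}{n-1}\mu$, which is strictly smaller than $2\mu$ since $n\geq3$, so none of them is deleted at the eigenvalue $2\mu$. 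Hence $\{v_i\cdot g\}$ and $\{\nabla^2v_i\}$ each contribute $\mult_{\Delta_0}(2\mu)$ and $\{h_i\}$ contributes $\dimn\kernel(\Delta_E|_{TT})$, which gives the first formula.

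For the index: since $\ind(\Delta_E)$ counts negative eigenvalues with multiplicity, $\{\delta^*\alpha_i\}$ again contributes nothing while $\{h_i\}$ contributes $\ind(\Delta_E|_{TT})$. The constant function $v_0$ has $\Delta_E$-eigenvalue $-2\mu<0$ and, $M$ being connected, is the only eigenfunction of $\Delta_0$ for the eigenvalue $0$; this yields the summand $1$. For the remaining $v_i\cdot g$ and $\nabla^2v_i$ one needs $0<\lambda_i^{(0)}<2\mu$, and by Obata's eigenvalue estimate every nonzero eigenvalue of $\Delta_0$ satisfies $\lambda_i^{(0)}\geq\frac{n}{n-1}\mu$, with equality possible only on the round sphere. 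If $M$ is not the round sphere, then $\mult_{\Delta_0}(\frac{n}{n-1}\mu)=0$ and every nonzero eigenvalue $\lambda<2\mu$ lies in $(\frac{n}{n-1}\mu,2\mu)$ and is counted once through $\{v_i\cdot g\}$ and once through $\{\nabla^2v_i\}$, hence with multiplicity $2\,\mult_{\Delta_0}(\lambda)$. If $M$ is the round sphere, the first eigenvalue $\frac{n}{n-1}\mu$ is still counted through $\{v_i\cdot g\}$ but, after the deletion, no longer through $\{\nabla^2v_i\}$, which is exactly the extra summand $\mult_{\Delta_0}(\frac{n}{n-1}\mu)$, while eigenvalues in $(\frac{n}{n-1}\mu,2\mu)$ are again counted with multiplicity $2\,\mult_{\Delta_0}(\lambda)$. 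Summing up yields the second formula.

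I expect the only delicate point to be the uniform bookkeeping for the round sphere: one must check that the deletion of the $\nabla^2v_i$ for the degree-one spherical harmonics $v_i$ is precisely compensated by the correction term $\mult_{\Delta_0}(\frac{n}{n-1}\mu)$ in the index formula, and use the elementary inequality $\frac{n}{n-1}\mu<2\mu$ for $n\geq3$ to see that the kernel count does not depend on whether $M$ is the sphere. Everything else is a transcription of Lemmas~\ref{spectrumdecomposition} and~\ref{divfreespectrum} together with Obata's eigenvalue estimate.
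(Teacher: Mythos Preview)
Your proof is correct and follows exactly the approach the paper intends: the paper's own proof consists of the single sentence ``This follows immediately from the proof of Lemma~\ref{spectrumdecomposition} and Obata's theorem,'' and you have spelled out precisely that bookkeeping, including the careful treatment of the round-sphere case where the $\nabla^2 v_i$ for the first eigenvalue are removed from the basis.
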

\begin{proof}This follows immediately from the proof of Lemma \ref{spectrumdecomposition} and Obata's theorem.
\end{proof}
\begin{prop}\label{productindex}Let $(M^{n_1},g_1)$, $(N^{n_2},g_2)$ be stable Einstein manifolds with constant $\mu>0$. Then
\begin{align*}\dimn(\kernel\Delta_{E}^{M\times N}|_{TT})=&\dimn(\kernel\Delta_{E}^{M}|_{TT})+\dimn(\kernel\Delta_{E}^{N}|_{TT})
                                                          +\mult_{\Delta_0^M}(2\mu)+\mult_{\Delta_0^N}(2\mu),\\
              \ind(\Delta_{E}^{M\times N}|_{TT})=&1+\sum_{\lambda\in (\frac{n_1}{n_1-1}\mu,2\mu)}\mult_{\Delta_0^M}(\lambda)+\sum_{\lambda\in (\frac{n_2}{n_2-1}\mu,2\mu)}\mult_{\Delta_0^N}(\lambda).
\end{align*}
\end{prop}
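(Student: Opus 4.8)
The plan is to compute $\dimn(\kernel\Delta_E^{M\times N})$ and $\ind(\Delta_E^{M\times N})$ on all of $\Gamma(S^2(M\times N))$ via Proposition \ref{productspectrum}, and then to peel off the contributions of the first two summands of the decomposition \eqref{decomp} for the product by applying Lemma \ref{kernelindexdecomposition} to $M\times N$ itself. Since $(M\times N,g_1+g_2)$ is a positive Einstein manifold of constant $\mu$ and dimension $n=n_1+n_2$, Lemma \ref{kernelindexdecomposition} gives
\begin{align*}\dimn(\kernel\Delta_E^{M\times N}|_{TT})&=\dimn(\kernel\Delta_E^{M\times N})-2\cdot\mult_{\Delta_0^{M\times N}}(2\mu),\\
\ind(\Delta_E^{M\times N}|_{TT})&=\ind(\Delta_E^{M\times N})-1-\mult_{\Delta_0^{M\times N}}\left(\frac{n}{n-1}\mu\right)-\sum_{\lambda\in(\frac{n}{n-1}\mu,2\mu)}2\cdot\mult_{\Delta_0^{M\times N}}(\lambda),
\end{align*}
so everything reduces to understanding $\spectrum(\Delta_E^{M\times N})$ up to and including $0$, and the low-lying spectrum of the function Laplacian on the product.

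Since $g_1$ and $g_2$ are stable with positive Einstein constant, Lemma \ref{spectrumdecomposition} (together with Lemma \ref{divfreespectrum}) shows that the negative eigenvalues of $\Delta_E^M$ are exactly the numbers $\lambda-2\mu$ for $\lambda\in\spectrum(\Delta_0^M)$ with $\lambda<2\mu$, that $\ind(\Delta_E^M|_{TT})=0$, and that $\kernel\Delta_E^M$ has dimension $2\cdot\mult_{\Delta_0^M}(2\mu)+\dimn(\kernel\Delta_E^M|_{TT})$; likewise for $N$. On the other hand, Obata's eigenvalue estimate says that the smallest nonzero eigenvalue of $\Delta_0^M$ is at least $\frac{n_1}{n_1-1}\mu>\mu$ and that of $\Delta_0^N$ at least $\frac{n_2}{n_2-1}\mu>\mu$, while $\frac{n}{n-1}\mu$ is strictly smaller than both $\frac{n_1}{n_1-1}\mu$ and $\frac{n_2}{n_2-1}\mu$, which in turn are $<2\mu$ since $n_1,n_2\geq3$. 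Because $\spectrum(\Delta_0^{M\times N})=\spectrum(\Delta_0^M)+\spectrum(\Delta_0^N)$ with eigenspaces the tensor products of the factor eigenspaces, these spectral gaps force $\mult_{\Delta_0^{M\times N}}(\frac{n}{n-1}\mu)=0$, $\mult_{\Delta_0^{M\times N}}(2\mu)=\mult_{\Delta_0^M}(2\mu)+\mult_{\Delta_0^N}(2\mu)$, and $\mult_{\Delta_0^{M\times N}}(\lambda)=\mult_{\Delta_0^M}(\lambda)+\mult_{\Delta_0^N}(\lambda)$ for $\lambda\in(\frac{n}{n-1}\mu,2\mu)$, the two summands on the right being nonzero only for $\lambda\in[\frac{n_1}{n_1-1}\mu,2\mu)$ and $\lambda\in[\frac{n_2}{n_2-1}\mu,2\mu)$ respectively.

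Next I would run through the three families of eigentensors $v_ik_j$, $w_ih_j$, $\alpha_i\odot\beta_j$ from the proof of Proposition \ref{productspectrum}. The family $\alpha_i\odot\beta_j$ contributes nothing: its eigenvalues $\lambda_i^{(1)}+\kappa_j^{(1)}$ are nonnegative because $\Delta_1^M$ and $\Delta_1^N$ are, and the positive Einstein condition rules out parallel $1$-forms, so $0$ does not occur either. For a tensor $v_ik_j$ (with $v_i$ an eigenfunction of $\Delta_0^M$ and $k_j$ an eigentensor of $\Delta_E^N$), a nonpositive eigenvalue $\lambda_i^{(0)}+\kappa_j^{(2)}$ forces $\kappa_j^{(2)}\leq0$, hence $\kappa_j^{(2)}=\lambda_N-2\mu$ for some $\lambda_N\in\spectrum(\Delta_0^N)$ with $\lambda_N\leq2\mu$; then $\lambda_i^{(0)}+\lambda_N\leq2\mu$ together with the Obata gaps forces one of $\lambda_i^{(0)},\lambda_N$ to vanish, and a short count using Lemma \ref{kernelindexdecomposition} for $N$ (with $\ind(\Delta_E^N|_{TT})=0$) pins down exactly how many such tensors lie in the kernel and in the negative eigenspace of $\Delta_E^{M\times N}$. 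The family $w_ih_j$ is handled by the symmetric argument with the roles of $M$ and $N$ interchanged. Summing the three contributions and substituting into the two identities of the first paragraph, together with the multiplicity computations above, produces the claimed formulas; the larger coefficients that appear in $\dimn(\kernel\Delta_E^{M\times N})$ and $\ind(\Delta_E^{M\times N})$ collapse to those in the statement precisely because twice the product multiplicities are subtracted off.

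The main obstacle is this final bookkeeping: one must enumerate the contributing pairs $(\lambda_i^{(0)},\kappa_j^{(2)})$ and $(\kappa_i^{(0)},\lambda_j^{(2)})$ carefully, avoid double-counting across the three families, and thread through the case in which $M$ or $N$ is the round sphere — there the $\nabla^2v$-summands in the decomposition \eqref{decomp} of that factor behave differently, which is exactly why the term $\mult_{\Delta_0}(\frac{n}{n-1}\mu)$ enters Lemma \ref{kernelindexdecomposition} with coefficient $1$ rather than $2$; since $M\times N$ is a nontrivial Riemannian product it is never itself a round sphere, so no such correction is needed for the product and the extraction formulas of the first paragraph apply without modification.
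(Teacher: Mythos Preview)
Your proposal is correct and follows essentially the same approach as the paper: compute $\dimn(\kernel\Delta_E^{M\times N})$ and $\ind(\Delta_E^{M\times N})$ on the full space via Proposition~\ref{productspectrum}, use Obata's estimate together with Lemma~\ref{spectrumdecomposition} and the stability of the factors to identify the contributing pairs, and then subtract off the non-$TT$ contributions using Lemma~\ref{kernelindexdecomposition} applied to the product (with the product Laplacian multiplicities computed from those of the factors). Your observation about the sphere endpoints entering with a different coefficient is exactly the mechanism behind the terms $2\cdot\mult_{\Delta_0^M}(\frac{n_1}{n_1-1}\mu)$ and $2\cdot\mult_{\Delta_0^N}(\frac{n_2}{n_2-1}\mu)$ that appear in the paper's intermediate formula for $\ind(\Delta_E^{M\times N})$.
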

\begin{proof}We now prove the first assertion. By Lemma \ref{divfreespectrum}, $\Delta_1^M$ and $\Delta_1^N$ are positive. Thus by Proposition \ref{productspectrum}, we have to count the number of eigenvalues (with their multiplicity)
$\lambda_i^{(0)}\in\spectrum(\Delta_0^M)$, $\lambda_i^{(2)}\in\spectrum(\Delta_E^M)$, $\kappa_i^{(0)}\in\spectrum(\Delta_0^N)$, $\kappa_i^{(2)}\in\spectrum(\Delta_E^N)$ such that
$\lambda_i^{(0)}+\kappa_i^{(2)}=0$ and $\lambda_i^{(2)}+\kappa_i^{(0)}=0$.
Consider the first equation. If $\lambda_i^{(0)}=\lambda_0^{(0)}=0$, then also $\kappa_i^{(2)}=0$ and the multiplicity\index{multiplicity} of $\kappa_i^{(2)}$ is given in Lemma \ref{kernelindexdecomposition}. If $\lambda_i^{(0)}>0$,
then $\kappa_i^{(2)}<0$. By Lemma \ref{spectrumdecomposition}, Lemma \ref{divfreespectrum} and since $(M,g_1)$ is stable, $\kappa_i^{(2)}+2\mu=\kappa_i^{(0)}\in\spectrum(\Delta_0^N)$. We thus have to find $\kappa^{(0)}_i$ such that $\lambda_i^{(0)}+\kappa_i^{(0)}=2\mu$ for $\lambda_i^{(0)}>0$.
By Obata's eigenvalue estimate, we have a lower bound $\lambda_i^{(0)},\kappa^{(0)}_i\geq \frac{n}{n-1}\mu$ for nonzero eigenvalues of the Laplacian. Therefore, the only situation which remains possible is that $\lambda_i^{(0)}=2\mu$
and $\kappa_i^{(0)}=\kappa_0^{(0)}=0$. Since eigenvalue zero has always multiplicity $1$, $\kappa_i^{(2)}=\kappa_0^{(0)}-2\mu=-2\mu$ is of multiplicity $1$. Now we do the same game for the equation $\lambda_i^{(2)}+\kappa_i^{(0)}=0$.
We obtain, after summing up both cases,
\begin{align*}\dimn(\kernel\Delta_{E}^{M\times N})=\dimn(\kernel\Delta_{E}^{M}|_{TT})+\dimn(\kernel\Delta_{E}^{N}|_{TT})
                                                          +3\mult_{\Delta_0^M}(2\mu)+3\mult_{\Delta_0^N}(2\mu).
\end{align*}
By the formula
\begin{align}\label{productmultiplicity}\mult_{\Delta_0^{M\times N}}(\tau)=\sum_{\lambda+\kappa=\tau}\mult_{\Delta_0^M}(\lambda)\cdot\mult_{\Delta_0^N}(\kappa)
\end{align}
and by Obata's eigenvalue estimate,
\begin{align*}\mult_{\Delta_0^{M\times N}}(2\mu)=\mult_{\Delta_0^M}(2\mu)+\mult_{\Delta_0^N}(2\mu).
\end{align*}
From Lemma \ref{kernelindexdecomposition}, we get the dimension of $\kernel\Delta_{E}^{M\times N}|_{TT}$.

To show the second assertion, we compute the number of eigenvalues (with multiplicity) satisfiying $\lambda_i^{(0)}+\kappa_i^{(2)}<0$ or $\lambda_i^{(2)}+\kappa_i^{(0)}<0$.
Consider the first inequality. If $\lambda_i^{(0)}=\lambda_0^{(0)}=0$, then $\kappa_i^{(2)}<0$ and the number of such eigenvalues (with multiplicity) is given by Lemma \ref{kernelindexdecomposition}.
If $\lambda_i^{(0)}>0$, then $\lambda_i^{(0)}\geq \frac{n}{n-1}\mu$ and $\kappa_i^{(2)}<-\frac{n}{n-1}\mu$. By Lemma \ref{spectrumdecomposition}, $\kappa_i^{(2)}+2\mu=\kappa_i^{(0)}\in\spectrum(\Delta_0^N)$
and $\kappa_i^{(0)}<\frac{n-2}{n-1}\mu$. By Obata's eigenvalue estimate, $\kappa_i^{(0)}=\kappa_0^{(0)}=0$ and $\kappa_i^{(2)}=-2\mu$ appears with multiplicity $1$. This also implies that $\lambda_i^{(0)}<2\mu$.
\noindent
Similarly, we deal with the inequality $\lambda_i^{(2)}+\kappa_i^{(0)}<0$.
Summing up over both cases, we obtain
\begin{align*}\ind(\Delta_{E}^{M\times N})=  2&+3\sum_{\lambda\in (\frac{n_1}{n_1-1}\mu,2\mu)}\mult_{\Delta_0^M}(\lambda)+3\sum_{\lambda\in (\frac{n_2}{n_2-1},2\mu)}\mult_{\Delta_0^N}(\lambda)\\
                                           &+2\cdot\mult_{\Delta_0^M}\left(\frac{n_1}{n_1-1}\mu\right)+2\cdot\mult_{\Delta_0^N}\left(\frac{n_2}{n_2-1}\mu\right).
\end{align*}
By \eqref{productmultiplicity} and by Obata's eigenvalue estimate,
\begin{align*}\sum_{\lambda\in (0,2\mu)}\mult_{\Delta_0^{M\times N}}(\lambda)=\sum_{\lambda\in (0,2\mu)}\mult_{\Delta_0^M}(\lambda)+\sum_{\lambda\in (0,2\mu)}\mult_{\Delta_0^N}(\lambda)
\end{align*}
and the second assertion follows from Lemma \ref{kernelindexdecomposition}.
\end{proof}
\begin{rem}
 Any product of positive Einstein metrics is unstable. The unstable eigentensor is a traceless linear combination, i.e.\ a volume-perserving deformation which shrinks the one factor of the product and enlarges the other factor.
We also see that Laplacian eigenvalues in the interval $(\frac{n}{n-1}\mu,2\mu)$ enlarge the index\index{index of a quadratic form} of the form $$TT\ni h\mapsto(\Delta_E h,h)_{L^2}.$$
\end{rem}
\begin{rem}Theorem \ref{existenceIED} follows immediately from the above Proposition.
Suppose we have an Eigenfunction $f\in C^{\infty}(M)$ to the Eigenvalue $2\mu$. Then, infinitesimal Einstein deformations on the product space $(M^{n_1}\times N^{n_2},g_1+g_2)$ are constructed as follows:
Consider the linear combination
\begin{align*}
 h=\alpha f\cdot g_1+\beta f\cdot g_2+\gamma\nabla ^2 f\in \Gamma(S^2(M\times N)),
\end{align*}
where $\alpha,\beta,\gamma\in\R$. We have $\Delta_E h=0$ and if $\beta=\frac{2-n_1}{n_2}\alpha$ and $\gamma=\frac{\alpha}{\mu}$, $h\in TT$.
 The nonintegrable infinitesimal Einstein deformations on $S^2\times \CP^{2n}$ mentioned in the introduction are of this form. In general, it is unclear whether such deformations can be integrable.
\end{rem}

\section{Sectional curvature bounds and Einstein deformations}\label{stabilitysectional}
Let $S^2_gM$\index{$S^2_gM$} be the vector bundle
 of symmetric $(0,2)$-tensors whose trace with respect to $g$ vanishes.\index{traceless tensor}
 We define a function $r:M\to \R$ by\index{$r(p)$}
\begin{align}\label{r(p)}r(p)=\sup\left\{\frac{\langle \mathring{R}\eta,\eta\rangle_p}{|\eta|^2_p}\Biggmid\eta\in (S^2_gM)_p\right\}.
\end{align}
 \begin{thm}[{{\cite[Theorem 3.3]{Koi78}}}]\label{koiso}Let $(M,g)$ be an Einstein manifold with Einstein constant $\mu$. If the function $r$ satisfies
\begin{align*}
\sup_{p\in M}r(p)\leq \max\left\{-\mu,\frac{1}{2}\mu\right\},
 \end{align*}
 then $(M,g)$ is stable. If the strict inequality holds, then $(M,g)$ is strictly stable.
 \end{thm}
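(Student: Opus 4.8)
The plan is to verify directly that the Einstein operator $\Delta_E=\nabla^*\nabla-2\mathring{R}$ is nonnegative, resp.\ positive, on $TT$-tensors; by Definition \ref{stability} this is exactly stability, resp.\ strict stability. For a $TT$-tensor $h$ one has $(\Delta_Eh,h)_{L^2}=\|\nabla h\|_{L^2}^2-2\int_M\langle\mathring{R}h,h\rangle\dv$, and since $h$ is trace-free the very definition of $r$ in \eqref{r(p)} gives the pointwise estimate $\langle\mathring{R}h,h\rangle_p\le r(p)\,|h|_p^2\le(\sup_M r)\,|h|_p^2$. Everything therefore comes down to a sufficiently strong lower bound for $\|\nabla h\|_{L^2}^2$ in terms of $\int_M\langle\mathring{R}h,h\rangle\dv$ and $\|h\|_{L^2}^2$; I would obtain two such bounds, by splitting $\nabla h\in\Gamma(T^*M\otimes S^2M)$ into its totally symmetric part and the complementary part.

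The first is a Weitzenböck identity attached to the symmetric component $\delta^*h\in\Gamma(S^3M)$. Expanding $|\delta^*h|^2$ and using that $\nabla h$ is already symmetric in its last two arguments gives the pointwise relation $3|\delta^*h|^2=|\nabla h|^2+2Q$, where $Q=\sum_{i,j,k}\nabla_{e_i}h(e_j,e_k)\,\nabla_{e_j}h(e_i,e_k)$. Integrating $Q$ by parts, applying the Ricci identity to the resulting commutator of second covariant derivatives, and using $\delta h=0$ together with $\ric=\mu g$, one finds $\int_M Q\dv=-\mu\|h\|_{L^2}^2+\int_M\langle\mathring{R}h,h\rangle\dv$, hence
\begin{align*}
\|\nabla h\|_{L^2}^2=3\|\delta^*h\|_{L^2}^2+2\mu\|h\|_{L^2}^2-2\int_M\langle\mathring{R}h,h\rangle\dv.
\end{align*}
For the complementary part I would instead look at $T(X,Y;Z)=\nabla_Xh(Y,Z)-\nabla_Yh(X,Z)$, the skew-symmetrization of $\nabla h$ in its first two arguments, for which $|T|^2=2|\nabla h|^2-2Q$ holds pointwise; the same value of $\int_M Q\dv$ then gives
\begin{align*}
0\le\|T\|_{L^2}^2=2\|\nabla h\|_{L^2}^2+2\mu\|h\|_{L^2}^2-2\int_M\langle\mathring{R}h,h\rangle\dv.
\end{align*}

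To finish I would substitute each of these into $(\Delta_Eh,h)_{L^2}=\|\nabla h\|_{L^2}^2-2\int_M\langle\mathring{R}h,h\rangle\dv$. Dropping $3\|\delta^*h\|_{L^2}^2\ge 0$ in the first identity gives
\begin{align*}
(\Delta_Eh,h)_{L^2}\ge 2\mu\|h\|_{L^2}^2-4\int_M\langle\mathring{R}h,h\rangle\dv\ge\bigl(2\mu-4\sup_M r\bigr)\|h\|_{L^2}^2,
\end{align*}
which is nonnegative as soon as $\sup_M r\le\tfrac12\mu$. Dropping $\|T\|_{L^2}^2\ge 0$ in the second identity gives
\begin{align*}
(\Delta_Eh,h)_{L^2}\ge -\mu\|h\|_{L^2}^2-\int_M\langle\mathring{R}h,h\rangle\dv\ge-\bigl(\mu+\sup_M r\bigr)\|h\|_{L^2}^2,
\end{align*}
which is nonnegative as soon as $\sup_M r\le-\mu$. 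Since the hypothesis $\sup_M r\le\max\{-\mu,\tfrac12\mu\}$ means that at least one of these two conditions is satisfied, $(M,g)$ is stable. If the inequality is strict, then in the relevant estimate the coefficient of $\|h\|_{L^2}^2$ is strictly positive while the discarded term is $\ge 0$, so $(\Delta_Eh,h)_{L^2}>0$ for every $h\ne 0$ and $(M,g)$ is strictly stable.

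The only real obstacle is the pair of integration-by-parts identities: one has to commute $\nabla_{e_i}$ and $\nabla_{e_j}$ carefully, use $\delta h=0$ to kill the spurious second-order term, and use the Einstein condition to turn one of the two resulting curvature contractions into $\mu\|h\|_{L^2}^2$, the other remaining as $\int_M\langle\mathring{R}h,h\rangle\dv$. The conceptual reason that precisely the constants $\tfrac12\mu$ and $-\mu$ show up is that for a trace-free $h$ the covariant derivative $\nabla h$ decomposes pointwise and orthogonally into its $S^3M$-component $\delta^*h$ and the complementary component measured by $T$ (indeed $|\nabla h|^2=|\delta^*h|^2+\tfrac13|T|^2$); integrating the squared norm of each component produces one of the two identities above, and no finer decomposition is needed.
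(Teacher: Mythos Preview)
Your proof is correct and is essentially the same as the paper's: your $\delta^*h$ and $T$ are, up to the normalizing constants $\sqrt{3}$ and $\sqrt{2}$, precisely the operators $D_1h$ and $D_2h$ that the paper introduces, and the two integrated identities you derive for $\|\nabla h\|_{L^2}^2$ become, after substituting into $(\Delta_Eh,h)_{L^2}=\|\nabla h\|_{L^2}^2-2(\mathring{R}h,h)_{L^2}$, exactly the Bochner formulas \eqref{bochner3} and \eqref{bochner4} (with $\delta h=0$). The only difference is presentational: the paper quotes these formulas from \cite{Koi78,Bes08}, whereas you rederive them via the quantity $Q$ and integration by parts.
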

\begin{proof}For completeness, we sketch the proof of this well-known result.
 Let the two differential operators $D_1$ and $D_2$ be defined\index{differential operator} by\index{$D_1$, a differential operator}\index{$D_2$, a differential operator}
\begin{align*}D_1h(X,Y,Z)=&\frac{1}{\sqrt{3}}(\nabla_X h(Y,Z)+\nabla_Y h(Z,X)+\nabla_Z h(X,Y)),\\
D_2h(X,Y,Z)=&\frac{1}{\sqrt{2}}(\nabla_X h(Y,Z)-\nabla_Y h(Z,X)).
\end{align*}
For the Einstein operator, we have the Bochner formulas\index{Bochner formula}
\begin{align}\label{bochner3}(\Delta_Eh,h)_{L^2}&=\left\|D_1h\right\|_{L^2}^2+2\mu \left\|h\right\|^2_{L^2}-4(\mathring{R}h,h)_{L^2}-2\left\|\delta h\right\|_{L^2}^2,\\
             \label{bochner4} (\Delta_Eh,h)_{L^2}&=\left\|D_2h\right\|_{L^2}^2-\mu \left\|h\right\|^2_{L^2}-(\mathring{R}h,h)_{L^2}+\left\|\delta h\right\|_{L^2}^2,
\end{align}
see \cite[p.~428]{Koi78} or \cite[p.~355]{Bes08} for more details.
Because of the bounds on $r$ and $\delta h=0$, we obtain either $(\Delta_Eh,h)_{L^2}\geq0$ or $(\Delta_Eh,h)_{L^2}>0$ for $TT$-tensors\index{TT@$TT$-tensor} by (\ref{bochner3}) or (\ref{bochner4}).
This proves Theorem \ref{koiso}.
\end{proof}
 \begin{lem}\label{fujitani}Let $(M,g)$ be Einstein and $p\in M$. Let $K_{min}$ and $K_{max}$\index{$K_{min}$, minimal sectional curvature}\index{$K_{max}$ maximal sectional curvature} be the minimum and maximum of its sectional curvature\index{sectional curvature} at $p$, then
 \begin{align}\label{restimate}r(p)\leq\min\left\{(n-2)K_{max}-\mu,\mu-nK_{min}\right\}.
 \end{align}
 If equality holds, i.e.\
 \begin{align}\label{twosidedcurvaturebound}r(p)=(n-2)K_{max}-\mu=\mu-nK_{min},\
 \end{align}
 then $(M,g)$ is even-dimensional. Let $\eta\in (S^2_gM)_p$ be such that $\mathring{R}\eta=r(p) \eta$. Then $\eta$ has only two eigenvalues $\lambda,-\lambda$
 and the eigenspaces\index{eigenspace} $E(\lambda),E(-\lambda)$ are both of dimension $m=n/2$. Moreover, $K(P)=K_{max}$ for each plane $P$ lying in either $E(\lambda)$ or $E(-\lambda)$ and $K(P)=K_{min}$ if $P$ is spanned
 by one vector in $E(\lambda)$ and one in $E(-\lambda)$.
 \end{lem}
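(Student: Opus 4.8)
\emph{Proof strategy.} The plan is to fix the point $p$, diagonalize $\eta$, rewrite the quadratic form $\eta\mapsto\langle\mathring R\eta,\eta\rangle_p$ on $(S^2_gM)_p$ purely in terms of sectional curvatures, and then read off both the estimate and the equality case from two complementary algebraic splittings of the products $\lambda_i\lambda_k$. Concretely: since $\mathring R$ acts as a self-adjoint endomorphism of the finite-dimensional space $(S^2_gM)_p$, the supremum $r(p)$ is attained and is its largest eigenvalue. Fix a nonzero $\eta\in(S^2_gM)_p$ and a $g$-orthonormal basis $\{e_1,\dots,e_n\}$ of $T_pM$ with $\eta(e_i,e_j)=\lambda_i\delta_{ij}$; trace-freeness means $\sum_i\lambda_i=0$. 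Writing $K_{ik}$ for the sectional curvature of the plane $e_i\wedge e_k$, a direct computation from $\mathring R\eta(X,Y)=\sum_l\eta(R_{e_l,X}Y,e_l)$ gives
\[
\langle\mathring R\eta,\eta\rangle_p=\sum_{i\neq k}K_{ik}\lambda_i\lambda_k=2\sum_{i<k}K_{ik}\lambda_i\lambda_k ,
\]
while the Einstein condition $\ric=\mu g$ is equivalent to $\sum_{k\neq i}K_{ik}=\mu$ for each fixed $i$.

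\emph{The two bounds.} From $\sum_i\lambda_i=0$ one has the elementary identities $\sum_{i<k}(\lambda_i+\lambda_k)^2=(n-2)|\eta|^2_p$, $\sum_{i<k}(\lambda_i-\lambda_k)^2=n|\eta|^2_p$, and, using $\sum_{k\neq i}K_{ik}=\mu$, also $\sum_{i<k}(\lambda_i^2+\lambda_k^2)K_{ik}=\mu|\eta|^2_p$. Substituting $2\lambda_i\lambda_k=(\lambda_i+\lambda_k)^2-\lambda_i^2-\lambda_k^2$ yields
\[
\langle\mathring R\eta,\eta\rangle_p=\sum_{i<k}(\lambda_i+\lambda_k)^2 K_{ik}-\mu|\eta|^2_p\leq\big((n-2)K_{max}-\mu\big)|\eta|^2_p ,
\]
and substituting instead $2\lambda_i\lambda_k=\lambda_i^2+\lambda_k^2-(\lambda_i-\lambda_k)^2$ yields
\[
\langle\mathring R\eta,\eta\rangle_p=\mu|\eta|^2_p-\sum_{i<k}(\lambda_i-\lambda_k)^2 K_{ik}\leq\big(\mu-nK_{min}\big)|\eta|^2_p .
\]
Dividing by $|\eta|^2_p$ and taking the supremum over $\eta$ gives \eqref{restimate}.

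\emph{The equality case.} Suppose \eqref{twosidedcurvaturebound} holds and take a nonzero $\eta$ with $\mathring R\eta=r(p)\eta$. Then $\langle\mathring R\eta,\eta\rangle_p=r(p)|\eta|^2_p$ equals both $((n-2)K_{max}-\mu)|\eta|^2_p$ and $(\mu-nK_{min})|\eta|^2_p$, so equality holds in both displayed estimates. Equality in the first forces $K_{ik}=K_{max}$ whenever $\lambda_i+\lambda_k\neq0$, and equality in the second forces $K_{ik}=K_{min}$ whenever $\lambda_i\neq\lambda_k$. Assuming $K_{max}>K_{min}$, these two conditions cannot both be triggered by the same pair, so for every $i<k$ one has $\lambda_i+\lambda_k=0$ or $\lambda_i=\lambda_k$; hence any two distinct eigenvalues of $\eta$ sum to $0$. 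Since $\eta\neq0$ and $\trace\eta=0$ exclude a single eigenvalue, $\eta$ has exactly the eigenvalues $\lambda,-\lambda$ with $\lambda\neq0$, and trace-freeness forces $\dimn E(\lambda)=\dimn E(-\lambda)=n/2$, so $n$ is even. Finally, $\eta$ restricts to a multiple of the identity on each eigenspace, so every orthonormal basis of $T_pM$ adapted to $E(\lambda)\oplus E(-\lambda)$ diagonalizes $\eta$; applying the two equality conditions to all such bases gives $K(P)=K_{max}$ for every plane $P$ inside $E(\lambda)$ or inside $E(-\lambda)$ and $K(P)=K_{min}$ for every plane spanned by one vector of $E(\lambda)$ and one of $E(-\lambda)$.

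\emph{Main point of care.} The computation is routine; the step that needs attention is the promotion of the pointwise coincidences $K_{ik}=K_{max}$ (resp.\ $=K_{min}$) — extracted from a single diagonalizing basis — to statements about \emph{all} planes of the respective type, which works precisely because $\eta$ is scalar on each eigenspace. This is also where the hypothesis $K_{max}>K_{min}$ is needed; in the constant-curvature situation the two equality conditions impose nothing and the rigidity conclusion is vacuous, but that case does not occur in the applications of Section~\ref{stabilitysectional}, where the relevant boundary value of $r$ is $\tfrac12\mu$ or $-\mu$.
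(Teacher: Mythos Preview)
Your argument is correct and reaches the same conclusions, but the route differs from the paper's. The paper does not work with the full quadratic form $\langle\mathring R\eta,\eta\rangle_p$; instead it chooses an eigenvector $\eta$ of $\mathring R$, orders the diagonalizing basis so that $|\lambda_1|=\max_i|\lambda_i|$, and evaluates the single eigenvalue equation $r(p)\lambda_1=(\mathring R\eta)(e_1,e_1)=\sum_i K_{i1}\lambda_i$. Adding and subtracting $K_{max}$ (resp.\ $K_{min}$) inside this sum gives the two bounds directly, and in the equality case yields, for each $i$, the alternative ``$\lambda_i=-\lambda_1$ or $K_{i1}=K_{max}$'' together with ``$\lambda_i=\lambda_1$ or $K_{i1}=K_{min}$''. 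Your approach replaces this extremal-coordinate trick by the symmetric identities $2\lambda_i\lambda_k=(\lambda_i\pm\lambda_k)^2\mp(\lambda_i^2+\lambda_k^2)$ combined with $\sum_{i<k}(\lambda_i^2+\lambda_k^2)K_{ik}=\mu|\eta|_p^2$. The payoff is that your equality analysis produces constraints on \emph{every} pair $(i,k)$ simultaneously, which makes the dichotomy $\lambda_i=\lambda_k$ or $\lambda_i=-\lambda_k$ and the plane-by-plane curvature conclusions drop out uniformly; the paper instead has to observe that $e_1$ can be taken to be any vector in either eigenspace and rerun the argument. Both proofs implicitly need $K_{max}\neq K_{min}$ for the rigidity statement, a point you flag explicitly and the paper leaves tacit.
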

 \begin{proof}Estimate \eqref{restimate} was already proven in \cite{Fuj79}. We redo the proof of \cite[Lemma 12.71]{Bes08}.
Choose $\eta$ such that $\mathring{R}\eta=r(p)\eta$. Let $\left\{e_1,\ldots,e_n\right\}$ be an orthonormal basis in which $\eta$ is
 diagonal with eigenvalues $\lambda_1,\ldots,\lambda_n$ such that $\lambda_1=\sup|\lambda_i|$ and $\sum \lambda_i=0$. Then
 \begin{align*}r(p)\lambda_1=(\mathring{R}\eta)(e_1,e_1)=\sum_i K_{i1}\lambda_i,
\end{align*}
 where $K_{i1}$ is the sectional curvature\index{sectional curvature} of the plane\index{plane} spanned by $e_i$ and $e_1$.\index{$K$, sectional curvature}
 Thus,
 \begin{equation}\begin{split}\label{fuji1}r(p)\lambda_1&=\sum_{i\neq1}K_{max}\lambda_i-\sum_{i\neq1}(K_{max}-K_{i1})\lambda_i\\
                         &\leq-\lambda_1 K_{max}+\lambda_1\sum_{i\neq1}(K_{max}-K_{i1})\\
                         &=((n-2)K_{max}-\mu)\lambda_1.
 \end{split}\end{equation}
 On the other hand,
 \begin{equation}\begin{split}\label{fuji2}r(p)\lambda_1&=\sum_{i\neq1}K_{min}\lambda_i+\sum_{i\neq1}(K_{i1}-K_{min})\lambda_i\\
                        &\leq-\lambda_1 K_{min}+\lambda_1\sum_{i\neq1}(K_{i1}-K_{min})\\
                        &=(-nK_{min}+\mu)\lambda,
 \end{split}
\end{equation}
so we have proven the first assertion. Suppose now that \eqref{twosidedcurvaturebound} holds, then equality must hold both in \eqref{fuji1} and \eqref{fuji2}.
 From (\ref{fuji1}), we get that either $\lambda_i=-\lambda_1$ or $K_{i1}=K_{max}$ whereas (\ref{fuji2}) implies $\lambda_i=\lambda_1$ or
 $K_{i1}=K_{min}$ for each $i$. Thus there only exist two eigenvalues $\lambda$ and $-\lambda$ which are of same multiplicity since the trace of $\eta$ vanishes. In particular, $(M,g)$ is even-dimensional.

 Let $P\subset T_pM$ be a plane which satisfies one of the assumptions of the lemma. We then may assume that $P$ is spanned by two vectors of the eigenbasis we have chosen. If $P\subset E(\lambda)$ or $P$ is spanned by two
 vectors in $E(\lambda)$, $E(-\lambda)$, respectively, we may assume $e_1\in P$. Then the assertions follow from the above. If $P\subset E(-\lambda)$, we may replace $\eta$ by $-\eta$ and the roles of $E(\lambda)$ and $E(-\lambda)$ interchange.
 \end{proof}
\noindent
From Theorem \ref{koiso} and the first part of Lemma \ref{fujitani}, the Corollaries \ref{pinching} and \ref{stabilitywhenK<0} are consequences.
\begin{rem}
 Observe that we also get stability by the above, if we just assume a one-sided bound on the sectional curvature in terms of the Einstein constant.
\end{rem}

 \begin{prop}\label{splittingtheorem1}Let $(M,g)$ be an Einstein manifold with constant $\mu$ such that the sectional curvature lies in the interval $[(n-2)/3n,1]\cdot K_{max}$. Then $(M,g)$ is stable.
 If $\kernel{\Delta_{E}|_{TT}}$ is nontrivial, $M^n$ is even-dimensional. Furthermore, there exists an orthogonal splitting\index{orthogonal splitting} $TM=\mathcal{E}\oplus\mathcal{F}$
 into two subbundles\index{subbundle} of dimension $n/2$. The two $C^{\infty}(M)$-bilinear maps\index{bilinear}
 \begin{align*}I\colon\Gamma(\mathcal{E})\times\Gamma(\mathcal{E})&\to \Gamma(\mathcal{F}),\qquad(X,Y)\mapsto pr_{\mathcal{F}}(\nabla_X Y)
\end{align*}
 and 
 \begin{align*}II\colon\Gamma(\mathcal{F})\times\Gamma(\mathcal{F})&\to \Gamma(\mathcal{E}),\qquad(X,Y)\mapsto pr_{\mathcal{E}}(\nabla_X Y)
 \end{align*}
 are both antisymmetric in $X$ and $Y$. Moreover, the sectional curvature\index{sectional curvature} of a plane\index{plane} $P$ is equal to $K_{max}$ if $P$ either lies in $\mathcal{E}$ or $\mathcal{F}$.
 If $P=\mathrm{span}\{e,f\}$ with $e\in\mathcal{E}$ and $f\in\mathcal{F}$, then $K(P)=K_{min}$.
 \end{prop}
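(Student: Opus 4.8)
The plan is to derive stability from Theorem~\ref{koiso} combined with the pointwise estimate of Lemma~\ref{fujitani}, and then to read off the rigid structure from the Bochner formula~\eqref{bochner3}. For stability, note that the pinching hypothesis forces $K_{max}\geq0$; I treat the main case $K_{max}>0$, where all sectional curvatures are positive and hence $\mu\geq(n-1)K_{min}>0$. Writing $K_{min}=\frac{n-2}{3n}K_{max}$, so that $nK_{min}=\frac{n-2}{3}K_{max}$, Lemma~\ref{fujitani} gives $r(p)\leq\min\{(n-2)K_{max}-\mu,\ \mu-nK_{min}\}$ at every $p$. A one-line case distinction — use the first bound when $\mu\geq\frac{2(n-2)}{3}K_{max}$ and the second when $\mu\leq\frac{2(n-2)}{3}K_{max}$ — shows $\sup_M r\leq\frac12\mu=\max\{-\mu,\frac12\mu\}$, so Theorem~\ref{koiso} yields stability.

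Now let $h\in\kernel(\Delta_E|_{TT})$ be nonzero. Plugging $\Delta_E h=0$ and $\delta h=0$ into \eqref{bochner3} gives $4(\mathring{R}h,h)_{L^2}=\|D_1 h\|_{L^2}^2+2\mu\|h\|_{L^2}^2$, hence $(\mathring{R}h,h)_{L^2}\geq\frac12\mu\|h\|_{L^2}^2$, while the pointwise inequality $\langle\mathring{R}h,h\rangle\leq r\,|h|^2\leq\frac12\mu\,|h|^2$ integrates to the reverse estimate. So everything is an equality: $D_1 h=0$, and at every $p$ with $h_p\neq0$ one has $r(p)=\frac12\mu$ and $\langle\mathring{R}h,h\rangle_p=r(p)|h|_p^2$; since $r(p)$ is the top eigenvalue of $\mathring{R}$ on $(S^2_gM)_p$, this forces $\mathring{R}h_p=r(p)h_p$. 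With $r(p)=\frac12\mu$, the inequalities in Lemma~\ref{fujitani} and in the previous paragraph must all be equalities at $p$ (in particular the pointwise sectional curvature extrema at $p$ equal the global ones), which is exactly condition~\eqref{twosidedcurvaturebound}. By Lemma~\ref{fujitani}, then, $n$ is even, $h_p$ has precisely the two eigenvalues $\pm\lambda(p)$ with $\lambda(p)=|h_p|/\sqrt{n}>0$ and orthogonal $(n/2)$-dimensional eigenspaces $\mathcal{E}_p$ (for $+\lambda$) and $\mathcal{F}_p$ (for $-\lambda$), and $K=K_{max}$ on planes inside one eigenspace while $K=K_{min}$ on planes spanning both.

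The hard part is globalizing the splitting, since a priori $\mathcal{E}$ and $\mathcal{F}$ are only defined and smooth on the open set $U=\{h\neq0\}$, which is dense by unique continuation for $\Delta_E$. On $U$ we have $h=\lambda(g_{\mathcal{E}}-g_{\mathcal{F}})$ with $g_{\mathcal{E}},g_{\mathcal{F}}$ the metric restricted to $\mathcal{E},\mathcal{F}$, and a direct computation using $\nabla g=0$ yields $\nabla_Z h(X,Y)=(Z\lambda)\langle X,Y\rangle$ for local sections $X,Y$ of $\mathcal{E}$ and arbitrary $Z$ (with the opposite sign for sections of $\mathcal{F}$). Feeding $X=Y=Z=E\in\mathcal{E}$, and then $X=Y=Z=F\in\mathcal{F}$, into $D_1 h=0$ forces $d\lambda=0$ on $U$, i.e.\ $\nabla|h|^2=0$ on $U$; since $|h|^2$ is globally smooth and $U$ is dense, $|h|^2$ is a positive constant. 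Hence $h$ is nowhere zero, $\lambda$ is a positive constant, and $\mathcal{E},\mathcal{F}$ extend to global smooth rank-$n/2$ subbundles of $TM$ with the stated curvature behaviour. I expect this step — that $D_1 h=0$ alone makes $|h|$ constant — to be the main obstacle; it is also what powers the final step.

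Finally, since $\lambda$ is constant, $\nabla_Z h$ vanishes on $\mathcal{E}\times\mathcal{E}$ and on $\mathcal{F}\times\mathcal{F}$, while for $E\in\mathcal{E}$ and $F\in\mathcal{F}$ one has $\nabla_Z h(E,F)=2\lambda\langle pr_{\mathcal{F}}(\nabla_Z E),F\rangle=-2\lambda\langle E,pr_{\mathcal{E}}(\nabla_Z F)\rangle$; for $Z\in\mathcal{E}$ the first expression equals $2\lambda\langle I(Z,E),F\rangle$, and for $Z\in\mathcal{F}$ the second equals $-2\lambda\langle E,II(Z,F)\rangle$. Evaluating $D_1 h=0$ on two sections of $\mathcal{E}$ and one of $\mathcal{F}$ gives $\langle I(X,Y)+I(Y,X),F\rangle=0$ for all $F\in\mathcal{F}$, hence $I(X,Y)=-I(Y,X)$; the mirror computation with two sections of $\mathcal{F}$ and one of $\mathcal{E}$ gives $II(X,Y)=-II(Y,X)$, and the curvature identities are immediate once $\mathcal{E},\mathcal{F}$ are the global eigenbundles of $h$.
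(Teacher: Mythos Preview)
Your proof is correct and follows essentially the same route as the paper: stability via Theorem~\ref{koiso} and Lemma~\ref{fujitani}, then the Bochner formula~\eqref{bochner3} forces $D_1h=0$ and pointwise equality in Lemma~\ref{fujitani}, from which the eigenvalue $\lambda$ is seen to be constant and the antisymmetry of $I,II$ is read off. The only cosmetic differences are that you phrase the computation of $\nabla_Z h$ via $h=\lambda(g_{\mathcal E}-g_{\mathcal F})$ where the paper uses Christoffel symbols in an eigenframe, and you invoke unique continuation to globalize, whereas in fact the argument that $\lambda$ is locally constant on $\{h\neq0\}$ already forces $|h|^2$ to be a nonzero constant by continuity (so unique continuation is not needed).
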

 \begin{proof}Because of the curvature assumpions, $\mu\geq \frac{2}{3}(n-2)K_{max}$ or $\mu\leq 2nK_{min}$ at each point. In both cases,
 the function $r$ from Lemma \ref{fujitani} satisfies $r\leq\frac{1}{2}\mu$. Thus, $r_0\leq \frac{1}{2}\mu$
 and Theorem \ref{koiso} implies that $(M,g)$ is stable. Suppose now there exists $h\in\kernel\Delta_{E}|_{TT}$, $h\neq0$. Then by (\ref{bochner4}),
 \begin{align*}0=(\Delta_{E}h,h)_{L^2}&=\left\|D_1 h\right\|^2_{L^2}+2\mu \left\| h\right\|_{L^2}^2-4(h,\mathring{R}h)_{L^2}
                                       \geq0+2\mu \left\| h\right\|_{L^2}^2-2\mu \left\| h\right\|_{L^2}^2=0.
 \end{align*}
 Therefore, $D_1h\equiv0$ and $\langle\mathring{R}h,h\rangle_p\equiv \frac{\mu}{2}|h|_p^2$ for all $p\in M$. The second equality implies that
 \begin{align*}\mu=\frac{2}{3}(n-2)K_{max}=2nK_{min}
 \end{align*} and
 \begin{align*}r(p)=(n-2)K_{max}-\mu=\mu-nK_{min}.
 \end{align*} Thus, Lemma \ref{fujitani} applies and at each point where
 $h\neq0$, the tangent space splits into the two eigenspaces\index{eigenspace} of $h$, i.e.\ $T_pM=E_p(\lambda)\oplus E_p(-\lambda)$. 
Since $D_1 h\equiv0$, we have
 \begin{align}\label{symmcovder}g(\nabla_{e_i}h(e_j),e_k)+g(\nabla_{e_j}h(e_k),e_i)+g(\nabla_{e_k}h(e_i),e_j)=0
 \end{align} 
for any local orthonormal frame $\left\{e_1,\ldots,e_n\right\}$. Here, we considered $h$ as an endomorphism $h:TM\to TM$.
Choose a local eigenframe\index{eigenframe} of $h$ around some $p$ outside the zero set of $h$.
A straightforward calculation shows
 \begin{align}\label{covder}\langle \nabla_{e_i}h(e_j),e_k\rangle&=(\nabla_{e_i}\lambda_j)\delta_{jk}+\lambda_j \Gamma_{ij}^k-\lambda_k\Gamma_{ij}^k,
 \end{align}
 where $\lambda_j$ is the eigenvalue of $e_j$.
 Now we rewrite (\ref{symmcovder}) as
 \begin{equation}\begin{split}\label{symmcovder2}(\lambda_j-\lambda_k)\Gamma_{ij}^k&+(\lambda_k-\lambda_i)\Gamma_{jk}^i+(\lambda_i-\lambda_j)\Gamma_{ki}^j
 =-(\nabla_{e_i}\lambda_j)\delta_{jk}-(\nabla_{e_j}\lambda_k)\delta_{ki}-(\nabla_{e_k}\lambda_i)\delta_{ij}.
 \end{split}\end{equation}
 If we choose $i=j=k$, we obtain
 \begin{align*}0=-3(\nabla_{e_i}\lambda_i).
 \end{align*}
 Since $\lambda_i=\pm \lambda$, it is immediate that $\lambda$ is constant and it is nonzero. Thus, we obtain a global splitting
 $TM=\mathcal{E}\oplus\mathcal{F}$
 where the two distributions\index{distribution!of the tangent bundle} are defined by
 \begin{align*}\mathcal{E}=\bigcup_{p\in M}E_p(\lambda),\qquad \mathcal{F}=\bigcup_{p\in M}E_p(-\lambda).
 \end{align*}
 By Lemma $\ref{fujitani}$, the assertion about the sectional curvatures is immediate. To finish the proof, it just remains to show the antisymmetry of the maps $I,II$, respectively.
 
Let $\left\{e_1,\ldots,e_n\right\}$ be the eigenframe\index{eigenframe} from before.
 We may assume that $e_1,\ldots,e_{n/2}$ are local sections in $\mathcal{E}$ and $e_{{n/2}+1},\ldots,e_n$ are local sections in $\mathcal{F}$.
 Choose $i,j\in\left\{1,\ldots,{n/2}\right\}$, $k\in\left\{{n/2}+1,\ldots,n\right\}$. Then
 $\lambda_i=\lambda_j=\lambda$, $\lambda_k=-\lambda$ and (\ref{symmcovder2}) yields
 \begin{align}\label{symmcovder3}0=2\lambda(\Gamma_{ij}^k+\Gamma_{ji}^k),
 \end{align}
 since the right-hand side of (\ref{symmcovder2}) vanishes for any $i,j,k$.
 Now consider the map $I$. We have\index{$\mathrm{pr}$, projection map}
 \begin{align}\label{christoffel}I(e_i,e_j)=\mathrm{pr}_{\mathcal{F}}(\nabla_{e_i}e_j)=\sum_{k=n/2+1}^{n} \Gamma_{ij}^ke_k,
 \end{align}
 and by (\ref{symmcovder3}), we immediately get $I(e_i,e_j)=-I(e_j,e_i)$. Similarly, antisymmetry is shown for $II$.
 \end{proof}
\noindent
 Now let us turn to the case of nonpositive secional curvature.
  \begin{defn}Let $(M,g)$ be a Riemannian manifold and let $\left\{e_1,\ldots,e_n\right\}$ be an orthonormal frame at $p\in M$.
 Then $K_{ij}=R_{ijji}$ is the sectional curvature\index{sectional curvature} of the plane spanned by $e_i$ and $e_j$ if $i\neq j$ and is zero if $i=j$.
 We count the number of $j$ such that $K_{i_0 j}=0$ for a given $i_0$ and call the maximum of such numbers
 over all orthonormal frames at $p$ the flat dimension of $M$ at $p$, denoted by $\text{fd}(M)_p$.\index{$\text{fd}(M)_p$, flat dimension of $M$ at $p$}
 The number $\text{fd}(M)=\sup_{p\in M}\text{fd}(M)_p$ is called the flat dimension of $M$.\index{flat dimension}\index{$\text{fd}(M)$, flat dimension of $M$}
 \end{defn}
 \begin{prop}[{{\cite[Proposition 3.4]{Koi78}}}]
 Let $(M,g)$ be a non-flat Einstein manifold with nonpositive sectional curvature. Then $(M,g)$ is stable. 
 If $\kernel (\Delta_{E}|_{TT})$ is nontrivial, the flat dimension of $M$ satisfies\index{flat dimension}
 $\text{fd}(M)_p\geq \lceil \frac{n}{2}\rceil$ at each $p\in M$.
 \end{prop}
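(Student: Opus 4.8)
The plan is to get stability straight out of the curvature estimate in Lemma~\ref{fujitani} combined with Theorem~\ref{koiso}, and then to read off the flat-dimension bound from the case of equality in the Bochner identity \eqref{bochner4}. To begin, a compact non-flat Einstein manifold with $K\le0$ automatically has $\mu<0$: summing sectional curvatures gives $\ric(v,v)=\mu\le0$ for every unit vector $v$, and $\mu=0$ would force every sectional curvature to vanish, hence $R\equiv0$, contradicting non-flatness. Thus $\max\{-\mu,\tfrac12\mu\}=-\mu$. Since $K_{max}(p)\le0$ at every point, Lemma~\ref{fujitani} gives $r(p)\le(n-2)K_{max}(p)-\mu\le-\mu$, so $\sup_{p}r(p)\le-\mu$ and Theorem~\ref{koiso} yields that $(M,g)$ is stable.

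Now assume $h\in\kernel(\Delta_E|_{TT})$ with $h\ne0$. Inserting $h$ into \eqref{bochner4} and using $\delta h=0$ gives
\begin{align*}
0=(\Delta_E h,h)_{L^2}=\|D_2 h\|_{L^2}^2-\mu\|h\|_{L^2}^2-(\mathring R h,h)_{L^2}.
\end{align*}
The definition of $r$ forces $\langle\mathring R h,h\rangle_p\le r(p)|h|_p^2\le-\mu|h|_p^2$ at every point, hence $(\mathring R h,h)_{L^2}\le-\mu\|h\|_{L^2}^2$, and therefore $\|D_2 h\|_{L^2}^2\le0$. Equality must hold throughout, giving $D_2 h\equiv0$, the pointwise identity $\langle\mathring R h,h\rangle_p=r(p)|h|_p^2$, and $r(p)=-\mu$ at every point where $h(p)\ne0$. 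By Aronszajn's unique continuation theorem applied to $\Delta_E h=0$, the open set $\{h\ne0\}$ is dense, and since $r$ is continuous with $r\le-\mu$ everywhere, we conclude $r\equiv-\mu$ on all of $M$.

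It remains to run a pointwise argument at an arbitrary $p\in M$. As $\mathring R_p$ is self-adjoint and preserves the finite-dimensional fibre $(S^2_g M)_p$, there is a nonzero trace-free $\eta$ with $\mathring R_p\eta=-\mu\,\eta$; diagonalize $\eta$ in an orthonormal basis $\{e_1,\dots,e_n\}$ of $T_pM$ with eigenvalues $\lambda_1,\dots,\lambda_n$ summing to zero, normalized so that $\lambda_1=\max_i|\lambda_i|>0$. For an index $i$ with $\lambda_i=\lambda_1$, the relations $(\mathring R\eta)(e_i,e_i)=\sum_k K_{ki}\lambda_k=-\mu\lambda_1$ and $\sum_k K_{ki}=\ric(e_i,e_i)=\mu$ combine to $\sum_k K_{ki}(\lambda_k+\lambda_1)=0$; every summand is $\le0$ (since $K_{ki}\le0$ and $\lambda_k\ge-\lambda_1$), so each vanishes, and in particular $K_{ki}=0$ whenever $\lambda_k\ne-\lambda_1$. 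Write $a$, $b$ for the multiplicities of $\lambda_1$, $-\lambda_1$ as eigenvalues of $\eta$. The conditions $\sum_k\lambda_k=0$ and $|\lambda_k|\le\lambda_1$ force $\min(a,b)\le\lfloor n/2\rfloor$ (otherwise $a+b\ge2\lfloor n/2\rfloor+2>n$); applying the same computation to $-\eta$ covers the other case, so we may assume $b\le\lfloor n/2\rfloor$. Choosing $i_0$ with $\lambda_{i_0}=\lambda_1$, the set $\{k : K_{k i_0}=0\}$ contains every $k$ with $\lambda_k\ne-\lambda_1$ (and also $k=i_0$), so it has at least $n-b\ge n-\lfloor n/2\rfloor=\lceil n/2\rceil$ elements. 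Hence $\text{fd}(M)_p\ge\lceil n/2\rceil$.

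The step I expect to be the main obstacle is the upgrade from ``$r=-\mu$ wherever $h\ne0$'' to ``$r\equiv-\mu$ on $M$'': this is precisely what allows the pointwise linear-algebra argument to run at \emph{every} point rather than only on the (a priori possibly proper) open set where $h$ is nonzero, and it is where unique continuation for $\Delta_E$ is genuinely needed. The multiplicity bookkeeping and the sign normalization for $\eta$ are routine by comparison.
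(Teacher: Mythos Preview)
The paper does not supply its own proof of this proposition; it is simply quoted from \cite{Koi78}. Your argument is correct and is very much in the spirit of the proofs the paper \emph{does} give for the neighbouring Propositions~\ref{splittingtheorem1} and~\ref{splittingtheorem2}: stability comes from Lemma~\ref{fujitani} combined with Theorem~\ref{koiso}, and the flat-dimension bound is extracted from the equality case of the Bochner identity~\eqref{bochner4} via a pointwise eigenvalue analysis of an eigentensor of $\mathring R$.

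Two minor remarks. First, the inequality $\min(a,b)\le\lfloor n/2\rfloor$ follows immediately from $a+b\le n$; the trace condition and the bound $|\lambda_k|\le\lambda_1$ are not needed for this step. Second, your appeal to Aronszajn's unique continuation is legitimate (the principal part of $\Delta_E=\nabla^*\nabla-2\mathring R$ has scalar symbol), and it is indeed the clean way to upgrade $r=-\mu$ from the dense open set $\{h\ne0\}$ to all of $M$; this is exactly what allows you to choose at \emph{every} point $p$ a trace-free $\eta$ with $\mathring R_p\eta=-\mu\,\eta$ and run the counting argument there, rather than only where $h$ itself is nonzero.
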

 If in addition, a lower bound on the sectional curvature is assumed, we obtain stronger consequences of the existence of infinitesimal Einstein deformations:
 \begin{prop}\label{splittingtheorem2}Let $(M,g)$ a non-flat Einstein manifold with nonpositive sectional curvature\index{sectional curvature} and Einstein constant $\mu$. If $K_{min}>\frac{2}{n}\mu$, then $(M,g)$ is strictly stable. If $K_{min}\geq \frac{2}{n}\mu$, 
 then $(M,g)$ is stable.
 If $\kernel{\Delta_{E}|_{TT}}$ is nontrivial, then $M$ is even-dimensional and we have an orthogonal splitting\index{orthogonal splitting}  $TM=\mathcal{E}\oplus\mathcal{F}$. Both subbundles\index{subbundle} are of dimension $n/2$.
 The $C^{\infty}(M)$-bilinear maps\index{bilinear}
 \begin{align*}I\colon\Gamma(\mathcal{E})\times\Gamma(\mathcal{E})&\to \Gamma(\mathcal{F}),\qquad(X,Y)\mapsto pr_{\mathcal{F}}(\nabla_X Y)
 \end{align*}
 and 
 \begin{align*}II\colon\Gamma(\mathcal{F})\times\Gamma(\mathcal{F})&\to \Gamma(\mathcal{E}),\qquad(X,Y)\mapsto pr_{\mathcal{E}}(\nabla_X Y)
 \end{align*}
 are symmetric. Moreover, $K(P)=0$ for any plane\index{plane} lying in $\mathcal{E}$ or $\mathcal{F}$.
 \end{prop}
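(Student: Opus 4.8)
The plan is to follow the proof of Proposition \ref{splittingtheorem1} closely, but using the Bochner formula \eqref{bochner4} (the one with $D_2$) in place of \eqref{bochner3}; this interchange is precisely what turns the antisymmetry of the maps $I,II$ into symmetry. First I would record two elementary facts: taking traces in the Einstein equation shows that nonpositive sectional curvature forces $\mu\le 0$, and non-flatness upgrades this to $\mu<0$ (a Ricci-flat manifold with $K\le 0$ is flat), so in particular $\max\{-\mu,\tfrac12\mu\}=-\mu$. Then, at any point $p$, the first part of Lemma \ref{fujitani} gives
\[
r(p)\le \min\{(n-2)K_{max}(p)-\mu,\ \mu-nK_{min}(p)\}\le\min\{-\mu,\ \mu-2\mu\}=-\mu ,
\]
using $K_{max}(p)\le 0$ and $K_{min}(p)\ge K_{min}\ge\tfrac2n\mu$, with the second entry strictly below $-\mu$ uniformly when $K_{min}>\tfrac2n\mu$. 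Stability, and strict stability in the strict case, now follow directly from Theorem \ref{koiso}.

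For the rigidity statement, suppose $0\ne h\in\kernel(\Delta_E|_{TT})$. Inserting $h$ into \eqref{bochner4} and using $\delta h=0$ together with the pointwise estimate $\langle\mathring{R}h,h\rangle_p\le r(p)|h|_p^2\le -\mu|h|_p^2$ gives
\[
0=(\Delta_E h,h)_{L^2}=\|D_2 h\|_{L^2}^2-\mu\|h\|_{L^2}^2-(\mathring{R}h,h)_{L^2}\ge\|D_2 h\|_{L^2}^2\ge 0 ,
\]
so $D_2h\equiv 0$ and $(\mathring{R}h,h)_{L^2}=-\mu\|h\|_{L^2}^2$. As the integrand is pointwise $\le -\mu|h|_p^2$, this forces $\langle\mathring{R}h,h\rangle_p=-\mu|h|_p^2$ at every point, hence $r(p)=-\mu$ and $\mathring{R}h_p=r(p)h_p$ wherever $h\ne 0$. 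Substituting $r(p)=-\mu$ back into Lemma \ref{fujitani} forces equality \eqref{twosidedcurvaturebound} there, with $K_{max}(p)=0$ and $K_{min}(p)=\tfrac2n\mu$; the equality part of Lemma \ref{fujitani} then tells us that $n$ is even, that $h_p$ has exactly the two eigenvalues $\pm\lambda$ with eigenspaces of dimension $n/2$, and that planes inside an eigenspace have sectional curvature $K_{max}(p)=0$.

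It remains to globalize this pointwise picture. The condition $D_2h\equiv 0$ says precisely that $h$ is a Codazzi tensor, $(\nabla_Xh)(Y)=(\nabla_Yh)(X)$ as endomorphisms, i.e.\ $T_{ijk}:=\langle(\nabla_{e_i}h)(e_j),e_k\rangle$ is symmetric in $i,j$ in any orthonormal frame. On the open set $U=\{p\mid h_p\ne 0\}$ the two eigenvalues $\pm\lambda$ and the two eigenbundles (of rank $n/2$) vary smoothly, and in a local eigenframe formula \eqref{covder} reads $T_{ijk}=(\nabla_{e_i}\lambda_j)\delta_{jk}+(\lambda_j-\lambda_k)\Gamma_{ij}^k$. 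Specializing $T_{ijk}=T_{jik}$ to $k=i$, $i\ne j$ gives $(\lambda_j-\lambda_i)\Gamma_{ij}^i=\nabla_{e_j}\lambda_i$; choosing $i\ne j$ in the same eigenbundle (possible since $n/2\ge 2$) makes the left-hand side vanish, so $\nabla\lambda=0$ on $U$. Hence $|h|^2=n\lambda^2$ is locally constant on $U$, while on $M\setminus U$ it vanishes and so attains a minimum, whence $d(|h|^2)\equiv 0$ on all of $M$; thus $|h|^2$ is a positive constant, $h$ is nowhere zero, $|\lambda|$ is a positive constant, and $\mathcal{E}=\kernel(h-|\lambda|\,\identity)$, $\mathcal{F}=\kernel(h+|\lambda|\,\identity)$ are global smooth subbundles of rank $n/2$ with $TM=\mathcal{E}\oplus\mathcal{F}$. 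Finally, specializing $T_{ijk}=T_{jik}$ to $i,j$ in one eigenbundle and $k$ in the other gives $2\lambda\Gamma_{ij}^k=2\lambda\Gamma_{ji}^k$ up to an overall sign, hence $\Gamma_{ij}^k=\Gamma_{ji}^k$; writing $I(e_i,e_j)=\sum_k\Gamma_{ij}^ke_k$ as in \eqref{christoffel} shows $I$, and by the same argument $II$, is symmetric, and the curvature statement is Lemma \ref{fujitani}.

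The main obstacle is this last step: passing from the structure that Lemma \ref{fujitani} provides only on $\{h\ne 0\}$ to a genuine global splitting. This requires both the correct specialization of the Codazzi identity, to extract $\nabla\lambda=0$, and the observation that $|h|^2$ can have no interior zeros, to rule out zeros of $h$. Everything else is a sign-flipped mirror of the proof of Proposition \ref{splittingtheorem1}.
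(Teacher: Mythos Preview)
Your proof is correct and follows essentially the same approach as the paper's: apply Theorem \ref{koiso} via the estimate in Lemma \ref{fujitani} for (strict) stability, then use the Bochner formula \eqref{bochner4} to force $D_2h\equiv 0$ and equality in \eqref{twosidedcurvaturebound}, and finally exploit the Codazzi identity in an eigenframe to get constancy of $\lambda$ and symmetry of $I,II$. Your globalization step---deducing $d(|h|^2)\equiv 0$ from $\nabla\lambda=0$ on $\{h\neq 0\}$ together with the minimum at zeros of $h$---is actually more carefully justified than the paper's own argument, which passes from the local eigenframe computation directly to a global splitting without explicitly excluding zeros of $h$.
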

 \begin{proof}
 Since the sectional curvature\index{sectional curvature} is nonpositive but not identically zero, the Einstein constant is negative.
 Now we follow the same strategy as in the proof of Proposition \ref{splittingtheorem1}.
 If $K_{min}>\frac{2}{n}\mu$, then $r_p<-\mu$ and by Proposition \ref{koiso}, $(M,g)$ is strictly stable.
 If $K_{min}\geq\frac{2}{n}\mu$ and $h\in \kernel(\Delta_{E}|_{TT})$, we obtain from (\ref{bochner4}) that
 \begin{align*}0=(\Delta_Eh,h)_{L^2}&=\left\|D_2 h\right\|^2_{L^2}-\mu \left\| h\right\|_{L^2}^2-(h,\mathring{R}h)_{L^2}
                                       \geq-\mu\left\| h\right\|_{L^2}^2+\mu\left\| h\right\|_{L^2}^2=0.
 \end{align*}
 Consequently, $D_2 h\equiv0$ and $r(p)=K_{max}-\mu=\mu-nK_{min}$. Again by Lemma \ref{fujitani}, there is a splitting $T_p M=E_p(\lambda)\oplus E_p(-\lambda)$ at each point $p\in M$ where $h\neq0$
  and $E_p(\pm\lambda)$ is the $n/2$-dimensional eigenspaces\index{eigenspace} of $h$ to the eigenvalue $\pm\lambda$, respectively.
 Evidently, $(M,g)$ is even-dimensional.
 We will now show that $\lambda$ is constant in $p$.
 Let $\left\{e_1,\ldots ,e_{n}\right\}$ be a local eigenframe of $h$ such that $e_1,\ldots ,e_{n/2}\in E(\lambda)$ and $e_{{n/2}+1},\ldots, e_{n}\in E(-\lambda)$ and let $\lambda_1\equiv\ldots\equiv\lambda_{n/2}$ and $\lambda_{n/2+1}\equiv\ldots\equiv\lambda_n$
be the corresponding eigenfunctions.
 Since $D_2 h\equiv0$, \eqref{covder} yields
 \begin{align}\label{antisymmcovder}(\lambda_j-\lambda_k)\Gamma_{ij}^k-(\lambda_i-\lambda_k)\Gamma_{ij}^k=-(\nabla_{e_i}\lambda_j)\delta_{jk}+(\nabla_{e_j}\lambda_i)\delta_{ik}
 \end{align}
 for $1\leq i,j,k\leq n$. Choose $i\neq j$ and $j=k$ such that $e_i,e_j,e_k$ lie in the same eigenspace.\index{eigenspace} Then by (\ref{antisymmcovder}),
 \begin{align*}0=-\nabla_{e_i}\lambda_j
 \end{align*}
 and since $\lambda_j$ equals either $\lambda$ or $-\lambda$, the eigenvalues of $h$ are constant in $p$.
 A splitting of the tangent bundle is obtained by $TM=\mathcal{E}\oplus\mathcal{F}$
 where the two distributions\index{distribution!of the tangent bundle} are defined by
 \begin{align*}\mathcal{E}=\bigcup_{p\in M}E_p(\lambda),\qquad \mathcal{F}=\bigcup_{p\in M}E_p(-\lambda).
 \end{align*}
 The flatness of planes\index{plane} in $\mathcal{E}$ and $\mathcal{F}$ follows from Lemma \ref{fujitani}. It remains to show the symmetry of $I$ and $II$.
 Let $\left\{e_1,\ldots e_n\right\}$ an orthonormal frame such that $e_1,\ldots,e_{n/2}$ are local sections in $\mathcal{E}$ and 
$e_{n/2+1},\ldots,e_n$ are local sections in $\mathcal{F}$.
 Let $i,j\in\left\{1,\ldots n/2\right\}$ and $k\in\left\{n/2+1,\ldots,n\right\}$.
 By (\ref{antisymmcovder}),
 \begin{align*}2\lambda\Gamma_{ij}^k-2\lambda\Gamma_{ji}^k=0.
 \end{align*}
 and since
 \begin{align}I(e_i,e_j)=\sum_{k=n/2+1}^{n} \Gamma_{ij}^ke_k,
 \end{align}
$I$ is symmetric. The symmetry of $II$ is shown by the same arguments. It is furthermore easy to see that both maps are $C^{\infty}(M)$-bilinear\index{bilinear}.
 \end{proof}
 \begin{rem}By symmetry of the operators $I$ and $II$, the map $(X,Y)\mapsto [X,Y]$ preserves the splitting $TM=\mathcal{E}\oplus\mathcal{F}$. Thus,
 both distributions\index{distribution!of the tangent bundle} are integrable by the Frobenius theorem\index{Frobenius theorem}. 
 \end{rem}

\end{document}